\documentclass[reqno,12pt,hidelinks]{amsart}
\usepackage{amssymb,amsmath,amsthm,amsxtra,mathtools,mathrsfs,calc,bm,float}
\usepackage{amsmath}
\usepackage[hypertexnames=false]{hyperref}
\usepackage{enumerate}
\usepackage[margin=1in]{geometry}
\usepackage{mleftright}
\mleftright
\usepackage{nccmath}

\usepackage{color}

\def\Z{\mathbb{Z}}
\def\Q{\mathbb{Q}}

\def\R{\mathbb{R}}

\def\N{\mathbb{N}}
\def\C{\mathbb{C}}

\numberwithin{equation}{section}

\DeclareMathOperator{\im}{Im}
\DeclareMathOperator{\re}{Re}
\def\SL{{\rm SL}}
\def\GL{{\rm GL}}

\newcommand{\pfrac}[2]{\Big(\frac{#1}{#2}\Big)}
\newcommand{\ptfrac}[2]{\Big(\tfrac{#1}{#2}\Big)}

\renewcommand{\bar}[1]{\overline{#1}}

\renewcommand{\sl}{\big|}

\DeclareMathOperator{\lcm}{lcm}
\DeclareMathOperator{\CSC}{CSC}

\def\ep{\varepsilon}

\setlength{\arraycolsep}{4pt}

\newtheorem{theorem}{Theorem}
\newtheorem{lemma}[theorem]{Lemma}
\newtheorem{corollary}[theorem]{Corollary}
\newtheorem{proposition}[theorem]{Proposition}

\theoremstyle{remark}

\numberwithin{equation}{section}
\numberwithin{theorem}{section}

% \mathtoolsset{showonlyrefs}

\usepackage[capitalise]{cleveref}
\crefname{figure}{Figure}{Figures}
%MATH ENVIRONMENTS
\theoremstyle{plain}
\newtheorem*{theorem*}{Theorem}
\crefname{theorem}{Theorem}{Theorems}
\crefname{corollary}{Corollary}{Corollaries}
\newtheorem*{corollary*}{Corollary}
\crefname{corollary*}{Corollary}{Corollaries}
\crefname{lemma}{Lemma}{Lemmas}
\crefname{proposition}{Proposition}{Propositions}
\crefname{conjecture}{Conjecture}{Conjectures}
\newtheorem*{conjecture*}{Conjecture}
\crefname{conjectures*}{Conjecture}{Conjectures}
\crefname{definition}{Definition}{Definitions}
\crefname{hypothesis}{Hypothesis}{Hypotheses}

\usepackage{autonum}

%\newconstantfamily{abcon}{symbol=c}

\begin{document}

\title{Zeros of $\GL_2$ $L$-functions on the critical line}
\date{\today}
\author{Nickolas Andersen}
\address{Department of Mathematics, Brigham Young University}
\email{nick@math.byu.edu}
\author{Jesse Thorner}
\address{Department of Mathematics, University of Florida}
\email{jesse.thorner@gmail.com}

\begin{abstract}
We use Levinson's method and the work of Blomer and Harcos on the $\mathrm{GL}_2$ shifted convolution problem to prove that at least 6.96\% of the zeros of the $L$-function of any holomorphic or Maa\ss~ cusp form lie on the critical line.
\end{abstract}

\maketitle

\section{Introduction}

Beginning with the celebrated work of Selberg \cite{Selberg}, the last 80 years have seen numerous results measuring the proportion $\kappa$ of nontrivial zeros of the Riemann zeta function that lie on the critical line $\re(s)=\frac{1}{2}$. To give an incomplete history, after Selberg proved that $\kappa>0$, Levinson \cite{Levinson} proved that $\kappa>\frac{1}{3}$.  Conrey \cite{Conrey} refined Levinson's ideas to  prove that $\kappa>\frac{2}{5}$.  Most recently, Pratt, Robles, Zaharescu, and Zeindler \cite{2018arXiv180210521P} proved that $\kappa>0.41729$, and this seems to be close to the limit of Levinson's method.
	The Riemann hypothesis asserts that all of the nontrivial zeros lie on the line $\re(s)=\frac{1}{2}$, which would imply that $\kappa=1$.

We consider the corresponding problem for $L$-functions of degree two.
Let $f$ be a Hecke newform (holomorphic or Maa\ss~) on $\Gamma_0(N_f)$ with trivial nebentypus.
The generalized Riemann hypothesis predicts that the nontrivial zeros of the associated $L$-function $L(s,f)$ all lie on the critical line.
Hafner \cite{Hafner1,Hafner2} modified  Selberg's techniques to prove that for $f$ of level $N_f=1$, the proportion $\kappa_f$ of such zeros is positive.  
Bernard \cite{bernard_thesis,Bernard} proved that for $f$ holomorphic of squarefree level and trivial nebentypus, we have $\kappa_f > 0.0297$; under the generalized Ramanujan conjecture for Hecke--Maa\ss~ forms, this improves to $\kappa_f>0.0693$.
This was recently improved to $\kappa_f > 0.02976$, and to $\kappa_f>0.06938$ under the generalized Ramanujan conjecture \cite{MR3936086}, using the ideas in \cite{2018arXiv180210521P}.
We improve these results as follows.

\begin{theorem}
\label{thm:main_theorem2}
Let $f$ be a holomorphic or Hecke--Maa\ss~ newform on $\Gamma_0(N_f)$ for any $N_f\geq 1$. Then $\kappa_f>0.0696$.  Under the generalized Ramanujan conjecture, we have $\kappa_f>0.0896$.
\end{theorem}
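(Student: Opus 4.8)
The plan is to adapt Levinson's method to the $\GL_2$ setting, following the template of Conrey and of Pratt–Robles–Zaharescu–Zeindler but with the arithmetic input coming from Blomer–Harcos. The zero-counting mechanism is standard: if $M(s)$ is a mollifier — a Dirichlet polynomial of length $N = T^{\vartheta}$ approximating $1/L(s,f)$ near the critical line — then one studies the function $V(s) = Q(-\frac{1}{\log X}\frac{d}{ds}) \Lambda(s,f) \cdot M(s)$, where $\Lambda(s,f)$ is the completed $L$-function and $Q$ is a polynomial with $Q(0)=1$ to be optimized at the end. A Littlewood-type argument, comparing $\frac{1}{2\pi}\int \log|V|$ over a rectangle to a lower bound coming from $\frac{1}{2\pi}\int \log|VM^{-1}| = \frac{1}{2\pi}\int\log|\Lambda\cdot Q(\cdots)|$, bounds the number of zeros of $\Lambda(s,f)$ off the critical line in terms of $\frac{1}{T}\int_0^T \log|V(\tfrac12 + it)|^2\,dt$; by Jensen's inequality this reduces the whole problem to evaluating the \emph{mollified second moment}
\[
  I = \frac{1}{T}\int_0^T \Big| Q\big(-\tfrac{1}{\log X}\tfrac{d}{ds}\big)\Lambda(s,f)\, M(s)\Big|^2\Big|_{s=\frac12+it}\,dt
\]
asymptotically as $T\to\infty$, and then optimizing over $Q$ and over $\vartheta = \log N/\log T$. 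The proportion $\kappa_f$ comes out as $1 - \frac{1}{R}\log(\text{something}(R,\vartheta,Q))$ after this optimization, exactly as in the degree-one case, and one plugs in the admissible $\vartheta$ to read off the numerical constants $0.0696$ and (under GRC) $0.0896$.

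The main work, and the only place the $\GL_2$ nature really enters, is the asymptotic evaluation of $I$. Expanding the square and using the approximate functional equation for $\Lambda(s,f)^2$ (equivalently, for $L(s,f\times\bar f) = L(s,\mathrm{sym}^2 f)\zeta(s)$ via Rankin–Selberg), one is led to sums of the shape
\[
  \sum_{h\neq 0}\ \sum_{m,n}\ \frac{\lambda_f(m)\lambda_f(n)\,a_h\,b_{mn}}{\sqrt{mn}}\, W\!\Big(\frac{mn}{X}\Big)\qquad\text{with } m - n = h,
\]
i.e.\ shifted convolution sums for the Fourier coefficients $\lambda_f$, twisted by the mollifier coefficients. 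The diagonal terms $m=n$ produce the main term, an Euler-product-times-arithmetic-factor expression that after Mellin inversion becomes the usual $\vartheta$-dependent integral; the genuinely hard estimate is that the off-diagonal (shifted, $h\neq0$) contribution is $o(T)$ after summing over all shifts $h$ and over the mollifier supports. This is precisely where the Blomer–Harcos bound on the $\GL_2$ shifted convolution problem is invoked: it gives a power-saving in the shift $h$ with polynomial dependence on the moduli, which is strong enough — provided the mollifier length is taken with $\vartheta < \tfrac12$ (and a bit larger under GRC, where the trivial/Ramanujan bound on $\lambda_f(n)$ removes a loss) — to sum trivially over the roughly $T^{1+\epsilon}$ relevant shifts and still save a power of $T$.

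I expect the main obstacle to be bookkeeping: carrying the Rankin–Selberg local factors and the ramification at primes dividing $N_f$ through the mollified moment, and tracking how the admissible exponent $\vartheta$ in the Blomer–Harcos estimate propagates into the final optimization, since the gain over \cite{Bernard,MR3936086} hinges on squeezing $\vartheta$ as close to $\tfrac12$ as the shifted-convolution bound allows. A secondary technical point is handling the archimedean parameters uniformly (the Maa\ss\ spectral parameter, or the weight in the holomorphic case) so that the gamma-factor manipulations in Levinson's argument and the weight functions $W$ in the shifted sums are controlled independently of $f$; this is routine but must be done carefully to get a clean statement valid for all $N_f \geq 1$. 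Once $I \sim (\text{main term})\cdot T$ is established, the optimization of $Q$ is a finite-dimensional calculus problem (a generalized eigenvalue problem, as in \cite{Conrey}) and yields the stated numerical bounds.
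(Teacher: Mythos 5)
Your high-level framework (Levinson/Littlewood zero-counting, mollified second moment, diagonal versus off-diagonal, shifted convolutions via Blomer--Harcos, final optimization over $Q$ and $R$) matches the paper, but the quantitative heart of the argument is wrong in a way that matters. You claim the off-diagonal terms can be controlled for any mollifier length $\vartheta<\tfrac12$; that cannot be right, and it is inconsistent with the numbers you are trying to prove: a mollifier of length $T^{1/2-\epsilon}$ in Levinson's method would yield $\kappa_f$ near $\tfrac13$, far beyond $0.0896$. The actual constraint in the paper is $\nu<\tfrac14-\tfrac{\theta}{2}$ (so $\nu<\tfrac14$ even under GRC, and about $0.195$ unconditionally with $\theta=\tfrac{7}{64}$). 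It arises because the best available off-diagonal bound is $N_{a,b}^{\pm}(\alpha,\beta)\ll (ab)^{1/2}T^{1/2+\theta}(abT)^{\epsilon}$, and summing over the mollifier variables $a,b\le T^{\nu}$ costs $T^{2\nu}$, forcing $\tfrac12+\theta+2\nu<1$. Your sketch never produces, or even identifies the need for, a bound of this shape, so the step from ``Blomer--Harcos gives power saving in $h$'' to the stated numerical constants has no support.

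Relatedly, your description of how the shift average is handled is closer to Bernard's earlier argument (per-shift bounds from Blomer, summed trivially), which is exactly what gives only $\kappa_f>0.0297$, resp.\ $0.0693$. The improvement in the paper comes from a different mechanism: after the approximate functional equation for the product $L(\tfrac12+\alpha+it,f)L(\tfrac12+\beta-it,f)$ (not for $L(s,f\times\bar f)$; Rankin--Selberg enters only in the diagonal main term), the $t$-integration localizes to $am/bn=1+O(T^{-1+\delta})$, so the shifted convolution is $\sum_{am-bn=h}$ with the mollifier moduli $a,b$ inside and only about $T^{-1+\delta}\sqrt{A_kA_\ell}$ shifts $h$ (not $T^{1+\epsilon}$). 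One then applies the Blomer--Harcos \emph{spectral decomposition} of the whole shifted sum and exploits the average over $h$ via a spectral large sieve (together with pointwise bounds for the Eisenstein part and large eigenvalues), which is what produces the $(ab)^{1/2}T^{1/2+\theta}$ bound and hence the enlarged range of $\nu$. Finally, the constants $0.0696$ and $0.0896$ do not just ``fall out'' of plugging in $\vartheta$: they require the Conrey-type reduction of $\inf_P c(P,Q,R,\nu/2)$ and a numerical search over $R$ and a degree-$14$ polynomial $Q$, which is a nontrivial part of the proof you have compressed to a remark.
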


We follow Bernard \cite{Bernard} in applying Levinson's attack on the problem of obtaining a lower bound for $\kappa_f$.  There are two broad areas of improvement.  
The first is a theoretical improvement involving the spectral decomposition of $\GL_2$ shifted convolution sums due to Blomer and Harcos \cite{bh-gafa}, which increases the length of the allowed mollifier (see Theorem \ref{thm:nu} below).
The second is a computational refinement in the process which translates our improved mollifier length into a numerical lower bound for $\kappa_f$.
% Our improved lower bound for $\kappa_f$ under the generalized Ramanujan conjecture is purely computational, but our unconditional improvement is both theoretical and computational.  
While our method applies equally well to both holomorphic and Maa\ss~ cusp forms, we have chosen to give most of the details of the proof in the Maa\ss~ case only, as it is more illuminating and only slightly more involved.

Our theoretical improvement lies far into the proof, and several preliminary reductions are nearly identical to the treatments of Levinson's method in \cite{bernard_thesis,Bernard,Young}.  In order to highlight the new contributions here, we will refer the reader to \cite{bernard_thesis,Bernard,Young} for certain lengthy standard calculations which do not pertain to our improvements.

\subsection*{Acknowledgements}

This work began while the first author was funded by NSF grant DMS-1701638 and the second author was a postdoctoral researcher at Stanford University (funded by a NSF Mathematical Sciences Postdoctoral Fellowship).

\section{A smooth mollified second moment}

Let $f$ be a Hecke--Maa\ss~ newform of level $N_f$ and trivial nebentypus. 
Then $f$ is an eigenfunction of the hyperbolic Laplacian $-y^{2}(\partial_{xx}+\partial_{yy})$ with Laplace eigenvalue $\lambda = \frac{1}{4}+r^2$, where $r\in\R$ or $ir\in[0,\frac{1}{2})$.  
We write the Fourier expansion of $f$ at infinity as
\begin{equation}
	f(z) = \sqrt{y}\sum_{n\neq 0}\lambda_f(n)K_{ir}(2\pi|n|y)e^{2\pi i n x},
\end{equation}
arithmetically normalized with $\lambda_f(1)=1$, where $z=x+iy$ and $K_{ir}(y)$ is the $K$-Bessel function.
Then $f$ gives rise to the $L$-function
\begin{equation}
	L(s,f) = \sum_{n=1}^{\infty}\frac{\lambda_f(n)}{n^s}=\prod_p(1-\lambda(p)p^{-s}+\chi(p)p^{-2s})^{-1},
\end{equation}
where $\chi(p)$ is the trivial character modulo $N$.
By the work of Kim and Sarnak \cite[Appendix]{Kim} and Blomer and Brumley \cite{BB1,BB2}, we know that there exists a constant $\theta \in [0,\tfrac{7}{64}]$ such that we have the uniform bounds
\begin{equation}
	|\lambda_f(n)|\leq d(n)n^{\theta},\qquad |\mathrm{Im}(r)|\leq \theta,
\end{equation}
where $d(n)$ is the number of divisors of $n$.
The Ramanujan conjecture asserts that $\theta=0$.
By Rankin-Selberg theory, we have the bound
\begin{equation}
\label{eq:rankin}
	\sum_{X\leq n\leq 2X} |\lambda_f(n)| \ll_{f,\ep} X^{1+\ep}
\end{equation}
for all $\epsilon>0$, which provides us with an average form of the Ramanujan conjecture.

The main object of study in the $\mathrm{GL}_2$ modification of Levinson's argument (following Young's treatment in \cite{Young}) is an asymptotic formula for a smooth mollified second moment of $L(s,f)$. 
Define $\mu_{f}(n)$ by the convolution identity
\begin{equation}
\sum_{d\mid n}\mu_{f}(d)\lambda_{f}(n/d)=\begin{cases}
1&\mbox{if $n=1$,}\\
0&\mbox{if $n>1$.}	
\end{cases}
\end{equation}
Let $T>0$ be a large parameter, and define
\begin{equation}
\label{eqn:mollifier_def}
	\psi(s)=\sum_{n\leq T^{\nu}}\frac{\mu_{f}(n)}{n^{s+\frac{1}{2}-\sigma_0}}P\Big(\frac{\log(T^{\nu}/n)}{\log T^{\nu}}\Big),\qquad \sigma_0=\mfrac{1}{2}-\mfrac{R}{\log T},
\end{equation}
where $R>0$ and $\nu>0$ are parameters to be determined later and $P\in\mathbb{R}[x]$ satisfies $P(0)=0$ and $P(1) = 1$.
Let $Q\in\mathbb{C}[x]$ satisfy $Q(0)=1$, and define
\begin{equation}
	V(s) = Q\Big( -\frac{1}{2\log T} \frac{d}{ds} \Big) L(f,s).
\end{equation}
As a corollary of his main theorem, Bernard \cite[Theorem~5]{Bernard} proved that for any $\nu$ satisfying
\begin{equation}
\label{eq:nu-range}
	0 < \nu < \mfrac{1-2\theta}{4+2\theta},
\end{equation}
we have
\begin{equation} \label{eq:limit-cPQR}
	\lim_{T\to\infty}\frac{1}{T}\int_1^T |V(\sigma_0+it)\psi(\sigma_0+it)|^2 dt = c(P,Q,2R,\nu/2),
\end{equation}
where
\begin{equation}
	c(P,Q,r,\xi) = 1+\frac{1}{\xi}\int_0^1\int_0^1 e^{2rv}|P'(u)Q(v)+\xi P(u)Q'(v)+\xi r P(u)Q(v)|^2 dudv.
\end{equation}
We then have
\begin{align}
\label{eqn:kappa_lower_bound}
\begin{aligned}
\kappa_f \geq\limsup_{T\to\infty}\Big(1-&\frac{1}{2R}\log\Big[\frac{1}{T}\int_1^T |V(\sigma_0+it)\psi(\sigma_0+it)|^2 dt\Big]\Big)\\
&\geq 1-\inf_{P,Q,R}\frac{\log c(P,Q,2R,\nu/2)}{2R}= 1-\inf_{P,Q,R}\frac{\log c(P,Q,R,\nu/2)}{R},
\end{aligned}
\end{align}
which we bound from below using a computer search (see Section \ref{sec:computation} below).

It seems that any significant theoretical improvement that uses Levinson's method comes from extending the permissible range \eqref{eq:nu-range} for $\nu$.
We will briefly recall the main steps in Bernard's argument leading to \eqref{eq:limit-cPQR} until we come to the most difficult part of the argument: estimating a certain shifted convolution sum.
We will then estimate this sum using the ideas of Blomer and Harcos \cite{bh-gafa}.  This results in the following improvement over \eqref{eq:nu-range}.

\begin{theorem} \label{thm:nu}
Equation \ref{eq:limit-cPQR} holds for any $\nu$ in the range
\begin{equation} \label{eq:nu-interval}
	0 <  \nu < \mfrac 14 - \mfrac{\theta}{2}.
\end{equation}
\end{theorem}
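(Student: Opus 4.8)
The plan is to retrace Bernard's derivation of \eqref{eq:limit-cPQR} and to isolate the single place where the admissible range of $\nu$ is forced, namely the estimation of an off-diagonal shifted convolution sum, which we then treat using the spectral decomposition of $\GL_2$ shifted convolution sums of Blomer and Harcos \cite{bh-gafa} in place of Bernard's circle-method argument. Expanding $|V(\sigma_0+it)\psi(\sigma_0+it)|^2$, inserting an approximate functional equation for the Dirichlet polynomial $V$, and performing the integral over $t\in[1,T]$ — exactly as in \cite{Young,Bernard} — splits the mean value into a diagonal contribution, which Bernard evaluates to $c(P,Q,2R,\nu/2)\,T+o(T)$ and which is unaffected by our improvement, plus an error term $\mathcal{E}(T)$ built from the off-diagonal terms. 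It therefore suffices to prove that $\mathcal{E}(T)=o(T)$ for every $\nu$ in the range \eqref{eq:nu-interval}.

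After separating variables in the smooth weights by Mellin inversion (again following \cite{Young}), $\mathcal{E}(T)$ becomes a sum over integers $\ell_1,\ell_2\le T^{\nu}$ arising from the mollifier $\psi$ and over a nonzero shift $r$ with $0<|r|\ll T^{1+\nu+\ep}$, of shifted convolution sums of the shape
\begin{equation*}
	\mathcal{D}(\ell_1,\ell_2;r)=\sum_{\ell_1 m-\ell_2 n=r}\lambda_f(m)\,\lambda_f(n)\,W\Big(\frac{\ell_1 m}{X},\frac{\ell_2 n}{X}\Big),\qquad X\asymp T^{1+\ep},
\end{equation*}
each weighted by a $T^{\ep}$-bounded power of $\ell_1,\ell_2,|r|$ and by the factor $T/|r|$ produced by the $t$-integration (so that the weight effectively concentrates the shift in the range $|r|\ll T^{\nu+\ep}$). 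In \cite{Bernard} the sum $\mathcal{D}(\ell_1,\ell_2;r)$ is estimated by the $\delta$-method; that estimate is lossy in its dependence on $\ell_1$ and $\ell_2$, and it is precisely this loss which produces the shorter range \eqref{eq:nu-range}. Our contribution is to replace this one estimate.

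For $\mathcal{D}(\ell_1,\ell_2;r)$ we invoke the bound coming from the spectral decomposition in \cite{bh-gafa}. This yields a genuine power saving in $X$ — roughly of size $X^{1/2+\theta}$ against the trivial bound $X^{1+\ep}$ furnished by \eqref{eq:rankin} — together with an \emph{explicit} polynomial dependence on the moduli $\ell_1,\ell_2$ and on the shift $r$; it is this uniformity, unavailable from the circle method, that enlarges the range of $\nu$. Before applying it one must check that the smooth weight $W$, assembled from the approximate functional equation and the $t$-integral, satisfies the regularity and support hypotheses of \cite{bh-gafa}: $W$ is essentially a fixed smooth function of two variables, of rapid decay, whose derivatives are $O\big((\log T)^{O(1)}\big)$, so this verification is routine, but it must be carried out with all implied constants uniform in $\ell_1,\ell_2,r$.

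Finally, we substitute the Blomer–Harcos estimate into $\mathcal{E}(T)$ and sum over $\ell_1,\ell_2\le T^{\nu}$, over the admissible shifts $r$, and over the dyadic ranges of $m$ and $n$, using the weights recorded above. Weighing the power saved in $X\asymp T$ against the combinatorial count of the mollifier moduli and the shifts shows that $\mathcal{E}(T)\ll T^{1-\delta}$ for some $\delta=\delta(\nu,\theta)>0$ exactly when $\nu<\tfrac14-\tfrac{\theta}{2}$, which proves the theorem. The main obstacle is the third step: extracting from \cite{bh-gafa} a form of the shifted convolution bound whose dependence on $\ell_1,\ell_2$ and $r$ is simultaneously explicit and strong enough to survive the summation — this is exactly where the spectral input improves on \cite{Bernard} — while a secondary technical difficulty is propagating that uniformity through the variable separation and the mild $(\log T)^{O(1)}$ oscillation of the weight $W$.
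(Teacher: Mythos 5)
Your outline matches the paper's strategy at the top level (diagonal terms from Bernard, off-diagonal terms reduced to shifted convolution sums which are then treated spectrally via Blomer--Harcos instead of the delta method), but the crucial step is carried out in a way that would not deliver the range \eqref{eq:nu-interval}. You propose to bound each shifted convolution sum $\mathcal D(\ell_1,\ell_2;r)$ \emph{individually} and then sum over the shifts $r$, with the $t$-integration supplying a weight $T/|r|$. Two things go wrong. First, with the smooth weight $w$ the $t$-integral is essentially $\min\bigl(T, A/|r|\bigr)$ in size, where $A\asymp \ell_1 m\asymp \ell_2 n$ can be as large as $T^{1+\ep}\sqrt{\ell_1\ell_2}$; hence \emph{every} shift with $1\le |r|\ll \sqrt{\ell_1\ell_2}\,T^{\delta}$ contributes at full strength $\asymp T$ (this is the content of \eqref{eq:S-def}--\eqref{eq:Hkl-def}), so there are genuinely $\asymp\sqrt{\ell_1\ell_2}$ relevant shifts, not an $|r|^{-1}$-concentrated family. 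Second, any bound for a single shift extracted from the spectral decomposition costs a factor of order $(\ell_1\ell_2)^{1/2}$, coming from the $L^1$ spectral bound \eqref{eq:L1} (equivalently, from the density of the spectrum at level $\asymp \ell_1\ell_2 N_f$); the same loss reappears if one instead uses Cauchy--Schwarz with the large sieve for a single $h$. Combining an individual-shift bound of this quality with the $\asymp\sqrt{\ell_1\ell_2}$ shifts gives $N^{\pm}_{a,b}\ll (ab)^{3/4+\theta/2}T^{1/2+\theta+\ep}$ rather than the bound \eqref{eq:N-a-b-upper-bound}, and after the mollifier summation this only yields $\nu<\frac{1-2\theta}{5+2\theta}$ --- weaker even than Bernard's range \eqref{eq:nu-range}, and far from $\nu<\frac14-\frac{\theta}{2}$.

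The missing idea is that the shifts must be treated \emph{on average}: the spectral decomposition of Theorem \ref{thm:BH} is applied to the full sum over $1\le |h|\le H$, the $h$-dependence is separated by Mellin inversion, and after Cauchy--Schwarz over the spectrum the weights $\widehat W_{g,t}$ are controlled by the $L^2$ bounds of Theorem \ref{thm:BH} (which cost only $(ab)^{\ep}$, not $(ab)^{1/2}$), while the linear forms $\sum_{h}\lambda_g^{(t)}(h)h^{-1/2}(h/A_k)^{-u}$ are controlled by the spectral large sieve \eqref{eq:large-sieve}; the eigenvalues $|r_g|\ge R$ are first split off using \eqref{eq:L1} with an $r$-th power of the Laplace eigenvalue. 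It is exactly this averaging over $h$ (giving Proposition \ref{prop:shift-conv} and hence Proposition \ref{prop:off-diag}) that removes the extra $(ab)^{1/4+\theta/2}$ and produces the exponent $\frac14-\frac{\theta}{2}$. A secondary inaccuracy: the claim that explicit dependence on $\ell_1,\ell_2,r$ is ``unavailable from the circle method'' is not the real issue --- Blomer's delta-method bounds used by Bernard are uniform too, just quantitatively weaker --- so without the $h$-average your argument does not isolate where the gain over \eqref{eq:nu-range} comes from.
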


\section{From the second moment to shifted convolution sums}

As in Young's short proof of Levinson's theorem, \eqref{eq:limit-cPQR} follows from an asymptotic formula for a smoothed, perturbed second moment integral.
Let $w:\R\to\R$ be a smooth function with compact support in $[T/4,2T]$ satisfying
\begin{equation}
	w^{(j)}(t) \ll \Big(\mfrac{\log T}T\Big)^j \qquad \text{ for each }j\geq 0.
\end{equation}
Let $\alpha,\beta$ be complex numbers satisfying $\alpha,\beta\ll 1/\log T$ and $|\alpha+\beta|\gg 1/\log T$.
We set
\begin{equation}
	I(\alpha,\beta) = \int_{\R}w(t)L(\tfrac{1}{2}+\alpha+it)L(\tfrac{1}{2}+\beta-it)|\psi(\sigma_0+it)|^2 dt.
\end{equation}
Then the main result of this section is the following.

\begin{theorem}
\label{thm:main_theorem}
	For $\nu$ satisfying \eqref{eq:nu-interval} we have
	\begin{equation}
		I(\alpha,\beta) = c(\alpha,\beta)\int_{\R}w(t)dt+O\Big(\frac{T(\log\log T)^4}{\log T}\Big),
	\end{equation}
	where
	\[
	c(\alpha,\beta)=1+\frac{1}{\nu}\cdot\frac{1-T^{-2(\alpha+\beta)}}{(\alpha+\beta)\log T}\cdot\frac{d^2}{dxdy}\Big[T^{-\nu(\beta x+\alpha y)}\int_0^1 P(x+u)P(y+u)du\Big]\Bigg|_{x=y=0}.
	\]
\end{theorem}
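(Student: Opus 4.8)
The plan is to follow the now-standard treatment of the $\GL_2$ mollified second moment of Young \cite{Young} and Bernard \cite{bernard_thesis,Bernard}: open both $L$-values by an approximate functional equation, expand $|\psi(\sigma_0+it)|^2$ as a short Dirichlet polynomial, carry out the $t$-integration against $w$, and split the resulting arithmetic sum into a diagonal contribution yielding the main term $c(\alpha,\beta)\int_\R w$ and an off-diagonal contribution built from $\GL_2$ shifted convolution sums. The only genuinely new input is the off-diagonal estimate: where Bernard's shifted convolution bound forces $\nu$ into the range \eqref{eq:nu-range}, we instead feed in the spectral decomposition of Blomer--Harcos \cite{bh-gafa}, which is strong enough to enlarge the range to \eqref{eq:nu-interval}.

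Concretely, first I would apply an approximate functional equation to each of $L(\tfrac12+\alpha+it,f)$ and $L(\tfrac12+\beta-it,f)$; since the analytic conductor at height $t$ is $\asymp t^2$, each factor becomes a smoothly truncated Dirichlet series of length $\asymp t\asymp T$ in the $\lambda_f$, plus a dual (reflected) series of comparable length. Expanding $|\psi(\sigma_0+it)|^2$ from \eqref{eqn:mollifier_def} --- using that $f$ has trivial nebentypus, so $\overline{\mu_f(k)}=\mu_f(k)$ --- turns $I(\alpha,\beta)$ into a quadruple sum over $m,n$ (of size up to $\asymp T$) and $h,k\le T^\nu$, weighted by $\lambda_f(m)\lambda_f(n)\mu_f(h)\mu_f(k)$ and the polynomial cutoffs, against $\int_\R w(t)(\cdots)(nk/mh)^{it}\,dt$, together with the three analogous pieces from the reflected series. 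Since $\widehat w$ decays rapidly and $w^{(j)}\ll(\log T/T)^j$ on $[T/4,2T]$, this $t$-integral is negligible unless $nk/mh=1+O(T^{-1}(\log T)^{A})$, i.e.\ unless the shift $r:=nk-mh$ satisfies $|r|\ll T^{-1}(\log T)^{A}\,mh\ll T^{\nu}(\log T)^{A}$. The contribution of $r=0$ is the diagonal; the terms with $r\ne0$ are the shifted convolution sums.

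For the diagonal, the passage to the main term is the ``standard calculation'' alluded to in the introduction, which I would organize exactly as in \cite{Young,Bernard}: represent the polynomial weights $P\big(\log(T^\nu/h)/\log T^\nu\big)$ and the approximate-functional-equation cutoffs (of scale $\asymp t^2$) by Mellin integrals, identify the arising $\GL_2$ Dirichlet series subject to $nk=mh$ --- whose Euler product factors as a $\zeta(1+s)$-type simple pole times an arithmetic factor in which the $L(1,\mathrm{sym}^2 f)$-contributions of the $\lambda_f$- and $\mu_f$-sums cancel --- and evaluate the residue at that pole. Tracking the polynomial data and the various powers of $T$ (these are the sources of the factors $1-T^{-2(\alpha+\beta)}$ and $T^{-\nu(\beta x+\alpha y)}$, the $2$ reflecting the degree-two conductor) through the residue, and adding the contribution of the reflected cross terms that supplies the $\alpha\leftrightarrow-\beta$ symmetrized piece, produces precisely $c(\alpha,\beta)\int_\R w(t)\,dt$ in the stated form. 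All truncation errors incurred here, together with the dual terms, are collected into the error $O(T(\log\log T)^4/\log T)$.

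The main obstacle, and where Theorem \ref{thm:main_theorem} is actually won, is the off-diagonal. For fixed $h,k$ and a fixed nonzero shift $r$ with $|r|\ll T^{\nu}(\log T)^{A}$, the constraint $nk-mh=r$ leaves, after the usual reduction to coprime moduli and a dyadic decomposition in $m,n$, a smoothed $\GL_2$ shifted convolution sum of the shape $\sum_m\lambda_f(m)\,\lambda_f\!\big(\tfrac{mh+r}{k}\big)\,W_T(m)$; one must show that after summing over all admissible $r$, over $h,k\le T^\nu$ weighted by $|\mu_f(h)\mu_f(k)|/(hk)^{1/2}$, and over the dyadic ranges, the total is $\ll T^{1-\delta}$ for some $\delta=\delta(\nu,\theta)>0$. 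Here I would invoke the spectral decomposition of shifted convolution sums of \cite{bh-gafa}, which bounds such sums with a genuine power saving of size (length)$^{1/2+\theta+\epsilon}$ and polynomial control in the shift $r$ and the auxiliary moduli; the point is that this saving --- combined with the $T^{-1}$ from the normalization $(mn)^{-1/2}$ against the $T$ from the $t$-integral --- beats the accumulated loss from the sums over $r$, $h$, $k$ precisely when $\nu<\tfrac14-\tfrac\theta2$. The delicate part, and the one place where the input from \cite{bh-gafa} must be invoked with care, is securing the required uniformity of this bound in $r$, $h$, $k$ (and in the $t$-aspect) and carrying it through the summation; once this is done, combining the diagonal main term with the negligible dual and off-diagonal contributions gives Theorem \ref{thm:main_theorem}, and hence --- via the differential-operator specialization in $\alpha,\beta$ and the identity relating the two forms of $c$ --- the enlarged range asserted in Theorem \ref{thm:nu}.
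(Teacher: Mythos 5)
Your overall architecture is the same as the paper's: open the two $L$-values with an approximate functional equation, expand the mollifier, obtain the main term from the diagonal exactly as in Young \cite{Young} and Bernard \cite{Bernard}, and reduce the off-diagonal to $\GL_2$ shifted convolution sums to be treated with Blomer--Harcos. (That you apply an approximate functional equation to each factor separately, rather than one for the product as in Lemma \ref{lem:approx_functional}, is immaterial, and your diagonal discussion is the standard computation the paper delegates to \cite{Bernard}.)

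The genuine gap is in the off-diagonal step, which is where the theorem is actually proved. You propose to bound the shifted convolution sum for each \emph{fixed} shift (your $r$; the paper's $h$) by a per-shift Blomer--Harcos power-saving bound and then sum trivially over the shifts and over the mollifier variables, asserting that this "beats the accumulated loss precisely when $\nu<\frac14-\frac\theta2$". That accounting does not close. In the paper's notation, on a dyadic block $am\asymp bn\asymp A_k\ll T^{1+\ep}\sqrt{ab}$ there are $\asymp A_kT^{-1+\ep}$ admissible shifts by \eqref{eq:Hkl-def}, and a per-shift bound of strength $A_k^{1/2+\theta}$ for the unnormalized sum (i.e.\ $\sqrt{ab}\,A_k^{-1/2+\theta}$ after dividing by $\sqrt{mn}$), combined with the factor $\asymp T$ carried by the weight containing the $t$-integral, gives roughly $(ab)^{1/2}A_k^{1/2+\theta}T^{\ep}\ll(ab)^{3/4+\theta/2}T^{1/2+\theta+\ep}$ for fixed $a,b$; summing over $a,b\le T^\nu$ against $(ab)^{-1/2}$ then forces only $\nu<\frac{1-2\theta}{5+2\theta}$ (at $\theta=0$ this is $1/5$), short of \eqref{eq:nu-interval} and indeed of Bernard's range. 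The paper instead proves the $ab$-efficient bound $N^{\pm}_{a,b}\ll(ab)^{1/2}T^{1/2+\theta+\ep}$ of Proposition \ref{prop:off-diag} by applying the spectral decomposition of Theorem \ref{thm:BH} to the \emph{entire} $h$-averaged sum \eqref{eq:shift-conv-sum} and exploiting the average over the shift: Mellin inversion in $h/A_k$, Cauchy--Schwarz, the spectral large sieve \eqref{eq:large-sieve} for the eigenvalues $|r_g|\le R$, Laplacian-weighted $L^1$ bounds \eqref{eq:L1} with a free parameter $R$ for the large eigenvalues, a separate Eisenstein estimate via \eqref{eq:lambda-bound-eis} and \eqref{eq:L1-eis}, and Sobolev-norm control (Lemma \ref{lem:W1-W2}) of the weight $W_2$ that carries the oscillatory $t$-integral and the $s$-dependence. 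You flag "uniformity in $r,h,k$" as the delicate point, but the missing idea is this averaging mechanism over the shift (together with the choice $\omega=\frac12-\theta$ and the optimization in $R$ in Proposition \ref{prop:shift-conv}); without it the range $\nu<\frac14-\frac\theta2$, i.e.\ the content of the theorem beyond Bernard, is not reached.
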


\begin{proof}[Theorem \ref{thm:main_theorem} implies Theorem \ref{thm:nu}]
Once the range of $\nu$ is established as claimed, the passage from Theorem \ref{thm:main_theorem} to \eqref{eq:limit-cPQR} is nearly identical to the corresponding argument given in Sections~2 and 3 of \cite{Young} for the Riemann zeta function.
\end{proof}

To prove Theorem \ref{thm:main_theorem}, we begin with a standard approximate functional equation.
\begin{lemma}
	\label{lem:approx_functional}
	Let $G(u)$ be a holomorphic function which is even and decays rapidly in the vertical strip $|\mathrm{Re}(s)|<4$, satisfying $G(0)=1$.  If $X>0$ and $\re(\alpha),\re(\beta)\in[-\frac 12,\frac 12]$, then
	\begin{multline}
		L(\tfrac{1}{2}+\alpha+it,\varphi)L(\tfrac{1}{2}+\beta+it,\varphi)
		\\ =
		\sum_{m,n\geq 1}\frac{\lambda(m)\lambda_f(n)}{m^{\frac{1}{2}+\alpha}n^{\frac{1}{2}+\beta}}\Big(\frac{m}{n}\Big)^{-it}V_{\alpha,\beta}(mn,t)
		+X_{\alpha,\beta}(t)\sum_{m,n\geq 1}\frac{\overline{\lambda(m)}~\overline{\lambda_f(n)}}{m^{\frac{1}{2}-\beta}n^{\frac{1}{2}-\alpha}}\Big(\frac{m}{n}\Big)^{-it}V_{-\beta,-\alpha}(mn,t),
	\end{multline}
	where
	\begin{align}
	V_{\alpha,\beta}(x,t) &= \frac{1}{2\pi i} \int_{(1)} \frac{G(s)}{s} g_{\alpha,\beta}(s,t)x^{-s}\, ds, \\
	g_{\alpha,\beta}(s,t) &= \frac{L_\infty(f, \frac 12+\alpha+s+it)L_\infty(f, \frac 12+\beta+s-it)}{L_\infty(f, \frac 12+\alpha+it)L_\infty(f, \frac 12+\beta-it)}, \\
	X_{\alpha,\beta}(t) &= \frac{L_\infty(\frac 12-\alpha-it)L_\infty(f,\frac 12-\beta+it)}{L_\infty(\frac 12+\alpha+it)L_\infty(\frac 12+\beta-it)}.
	\end{align}
\end{lemma}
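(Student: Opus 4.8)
The plan is to carry out the standard contour-shift derivation of an approximate functional equation (cf.\ Iwaniec and Kowalski, Theorem~5.3, or Young's treatment \cite{Young} of the analogous identity for $\zeta$), now for the product of two shifted copies of $L(\cdot,f)$. Write $\Lambda(w,f)=L_\infty(f,w)L(w,f)$ for the completed $L$-function, with $L_\infty(f,w)$ the full gamma factor (conductor included); since $f$ is a cuspidal newform of trivial nebentypus it is self-dual, $\Lambda(w,f)$ is entire, and the functional equation reads $\Lambda(w,f)=\ep_f\,\Lambda(1-w,f)$ with $\ep_f\in\{\pm1\}$. Fixing $t$ and the shifts $\alpha,\beta$, put
\[
\Lambda_{\alpha,\beta}(s)=\Lambda(\tfrac12+\alpha+it+s,f)\,\Lambda(\tfrac12+\beta-it+s,f),\qquad \widetilde\Lambda_{\alpha,\beta}(s)=\Lambda(\tfrac12-\alpha-it+s,f)\,\Lambda(\tfrac12-\beta+it+s,f),
\]
so that applying the functional equation to each factor gives $\Lambda_{\alpha,\beta}(s)=\ep_f^2\,\widetilde\Lambda_{\alpha,\beta}(-s)=\widetilde\Lambda_{\alpha,\beta}(-s)$, the root number squaring to $1$ by self-duality. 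I would begin from
\[
\mathcal I:=\frac{1}{2\pi i}\int_{(c)}\frac{G(s)}{s}\,\Lambda_{\alpha,\beta}(s)\,X^{s}\,ds \qquad (\tfrac12<c<4),
\]
carrying a free parameter $X>0$ along to position the two resulting Dirichlet sums.

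Next I would shift the contour to $\re(s)=-c$. As $G$ and $\Lambda_{\alpha,\beta}$ are entire, the only pole crossed is the simple one of $1/s$ at $s=0$, with residue $G(0)\Lambda_{\alpha,\beta}(0)=\Lambda(\tfrac12+\alpha+it,f)\Lambda(\tfrac12+\beta-it,f)$. On the line $\re(s)=-c$ I replace $\Lambda_{\alpha,\beta}(s)$ by $\widetilde\Lambda_{\alpha,\beta}(-s)$ via the functional equation and then substitute $s\mapsto-s$; since $G$ is even, $G(s)/s$ is odd and the contour returns to $\re(s)=c$, producing
\[
\Lambda(\tfrac12+\alpha+it,f)\Lambda(\tfrac12+\beta-it,f)=\frac{1}{2\pi i}\int_{(c)}\frac{G(s)}{s}\Lambda_{\alpha,\beta}(s)X^{s}\,ds+\frac{1}{2\pi i}\int_{(c)}\frac{G(s)}{s}\widetilde\Lambda_{\alpha,\beta}(s)X^{-s}\,ds.
\]
Dividing through by $L_\infty(f,\tfrac12+\alpha+it)L_\infty(f,\tfrac12+\beta-it)$ turns the left side into $L(\tfrac12+\alpha+it,f)L(\tfrac12+\beta-it,f)$, collapses the gamma factors of the first integral to $g_{\alpha,\beta}(s,t)$, and factors those of the second integral as $X_{\alpha,\beta}(t)\,g_{-\beta,-\alpha}(s,t)$, the $s$-independent ratio $X_{\alpha,\beta}(t)$ pulling out (since $L_\infty$ carries the conductor, $X_{\alpha,\beta}(t)$ automatically contains the expected power of $N_f$, and the root number has already cancelled).

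Finally, for $c>\tfrac12$ the Dirichlet series $L(\tfrac12+\alpha+it+s,f)L(\tfrac12+\beta-it+s,f)=\sum_{m,n\ge1}\lambda_f(m)\lambda_f(n)\,m^{-1/2-\alpha-it-s}n^{-1/2-\beta+it-s}$ converges absolutely on the contour — here one invokes the Rankin--Selberg bound \eqref{eq:rankin} (or $|\lambda_f(n)|\ll_\ep n^{\theta+\ep}$) — so summation and integration may be interchanged. Recognizing the resulting inner $s$-integral of $\frac{G(s)}{s}g_{\alpha,\beta}(s,t)(mn)^{-s}$ as $V_{\alpha,\beta}(mn,t)$, whose contour may be moved back to $\re(s)=1$ since the poles of the numerator gamma factors of $g_{\alpha,\beta}$ all have $\re(s)\le-\tfrac12+\theta$, yields the first sum $\sum_{m,n}\frac{\lambda_f(m)\lambda_f(n)}{m^{1/2+\alpha}n^{1/2+\beta}}(m/n)^{-it}V_{\alpha,\beta}(mn,t)$. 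The identical manipulation of the second integral, starting from the Dirichlet expansion of $L(\tfrac12-\alpha-it+s,f)L(\tfrac12-\beta+it+s,f)$, produces the coefficients $\overline{\lambda_f(m)}\,\overline{\lambda_f(n)}$ (equal to $\lambda_f(m)\lambda_f(n)$ by self-duality, but recorded with bars as in the statement) together with the kernel $V_{-\beta,-\alpha}(mn,t)$, whereupon taking $X=1$ gives the asserted identity. The whole argument is routine — it is the approximate functional equation for $\GL_2$ in the very shape familiar from the $\zeta$ case — so I anticipate no real obstacle; the only points deserving a moment's care are the justification of the term-by-term integration (handled by taking $c$ comfortably to the right of $1$ and invoking \eqref{eq:rankin}), and checking, via the hypotheses $\re(\alpha),\re(\beta)\in[-\tfrac12,\tfrac12]$ and $|\im(r)|\le\theta$, that the divisions and contour shifts stay clear of the poles of the gamma factors — bookkeeping that is painless precisely because $f$ is self-dual.
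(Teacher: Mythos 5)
Your proposal is correct and is essentially the paper's own proof: the paper simply invokes \cite[Theorem 5.3]{IK} "with minor changes," and what you have written out is exactly that standard contour-shift/functional-equation argument adapted to the product of two shifted copies of $L(s,f)$, with self-duality killing the root number. The only cosmetic point is that absolute convergence of the double Dirichlet series requires the initial contour to sit to the right of $1+\theta-\min(\re\alpha,\re\beta)$ (so "comfortably to the right of $1$" should mean, e.g., $\re(s)=2$ when $\re\alpha$ or $\re\beta$ is near $-\tfrac12$), after which shifting the inner integral back to $\re(s)=1$ is justified exactly as you say.
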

\begin{proof}
	This follows from minor changes to the proof of \cite[Theorem 5.3]{IK}.
\end{proof}

Applying the approximate functional equation to $I(\alpha,\beta)$, we find that
\begin{equation}
	I(\alpha,\beta) = \sum_{a,b\leq T^\nu} \frac{\mu_f(a)\mu_f(b)}{\sqrt{ab}} P\Big( \frac{\log(T^\nu/a)}{\log T^\nu} \Big) P\Big( \frac{\log(T^\nu/b)}{\log T^\nu} \Big) \sum_{\pm} \Big( D_{a,b}^{\pm}(\alpha,\beta) + N_{a,b}^{\pm}(\alpha,\beta) \Big),
\end{equation}
where we have split the sum into diagonal terms
\begin{align}
	D_{a,b}^{+}(\alpha,\beta) &= \sum_{am=bn} \frac{\lambda_f(m)\lambda_f(n)}{m^{\frac 12+ \alpha}n^{\frac 12+\beta}} \int_{-\infty}^{\infty} w(t)  V_{\alpha,\beta}(mn,t) \, dt \\
	D_{a,b}^{-}(\alpha,\beta) &= \sum_{am=bn} \frac{\lambda_f(m)\lambda_f(n)}{m^{\frac 12- \alpha}n^{\frac 12-\beta}} \int_{-\infty}^{\infty} w(t)  X_{\alpha,\beta}(t) V_{-\beta,-\alpha}(mn,t) \, dt
\end{align}
and off-diagonal terms
\begin{align}
	N_{a,b}^{+}(\alpha,\beta) &= \sum_{am\neq bn} \frac{\lambda_f(m)\lambda_f(n)}{m^{\frac 12+ \alpha}n^{\frac 12+\beta}} \int_{-\infty}^{\infty} w(t) \pfrac{bn}{am}^{it} V_{\alpha,\beta}(mn,t) \, dt \\
	N_{a,b}^{-}(\alpha,\beta) &= \sum_{am\neq bn} \frac{\lambda_f(m)\lambda_f(n)}{m^{\frac 12- \alpha}n^{\frac 12-\beta}} \int_{-\infty}^{\infty} w(t) \pfrac{bn}{am}^{it} X_{\alpha,\beta}(t) V_{-\beta,-\alpha}(mn,t) \, dt.
\end{align}

\begin{proposition} \label{prop:diag}
For any $\nu\in (0,1)$ and $\alpha,\beta$ as above, we have
	\begin{multline}
		\sum_{a,b\leq T^\nu} \frac{\mu_f(a)\mu_f(b)}{\sqrt{ab}} P\Big( \frac{\log(T^\nu/a)}{\log T^\nu} \Big) P\Big( \frac{\log(T^\nu/b)}{\log T^\nu} \Big) \sum_{\pm} D_{a,b}^{\pm}(\alpha,\beta) \\
		= c(\alpha,\beta)\int_{\R}w(t)dt+O\Big(\frac{T(\log\log T)^4}{\log T}\Big).
	\end{multline}
\end{proposition}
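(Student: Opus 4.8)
The plan is to adapt Young's treatment of Levinson's theorem for $\zeta$ \cite{Young} to the $\GL_2$ setting, following Bernard \cite{Bernard, bernard_thesis}; the diagonal introduces no essentially new difficulty, and the reason $\nu$ may be taken anywhere in $(0,1)$ here — rather than in the restricted range \eqref{eq:nu-interval} — is that the error analysis below rests only on Rankin--Selberg-type bounds like \eqref{eq:rankin}, so the pointwise exponent $\theta$ never intervenes. First I would split off the $D^+$ and $D^-$ contributions. For the $D^-$ piece, Stirling's formula applied to the archimedean factors shows that on the support of $w$ one has $X_{\alpha,\beta}(t) = (1 + O(1/T))(t/2\pi)^{-2(\alpha+\beta)}$; since $|\alpha+\beta| \gg 1/\log T$ one may then replace $(t/2\pi)^{-2(\alpha+\beta)}$ by $T^{-2(\alpha+\beta)}$ at a cost of $O(T/\log T)$, after which the $D^-$ contribution has exactly the shape of the $D^+$ contribution (with the shift parameters negated and, in the gamma factors, interchanged) multiplied by $T^{-2(\alpha+\beta)}$. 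Pairing the two will produce the numerator $1 - T^{-2(\alpha+\beta)}$ in $c(\alpha,\beta)$, so it remains to evaluate $\sum_{a,b \le T^\nu} \frac{\mu_f(a)\mu_f(b)}{\sqrt{ab}} P\big(\tfrac{\log(T^\nu/a)}{\log T^\nu}\big) P\big(\tfrac{\log(T^\nu/b)}{\log T^\nu}\big) D_{a,b}^+(\alpha,\beta)$.

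To analyze the individual $D_{a,b}^+$, write $g = (a,b)$, $a = ga'$, $b = gb'$ with $(a',b') = 1$; then $am = bn$ forces $m = b'k$, $n = a'k$ for $k \ge 1$, and inserting the Mellin representation of $V_{\alpha,\beta}$ from Lemma~\ref{lem:approx_functional} gives
\[
D_{a,b}^+(\alpha,\beta) = \frac{1}{(a')^{\frac12+\beta}(b')^{\frac12+\alpha}} \cdot \frac{1}{2\pi i}\int_{(1)} \frac{G(s)}{s}\,(a'b')^{-s}\,Z_{a',b'}(1+\alpha+\beta+2s)\Big(\int_{\R} w(t)\,g_{\alpha,\beta}(s,t)\,dt\Big)ds,
\]
where $Z_{a',b'}(w) = \sum_{k\ge 1} \lambda_f(a'k)\lambda_f(b'k)\,k^{-w}$. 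By Rankin--Selberg theory $Z_{a',b'}(w) = \zeta(w)\,L(w,\mathrm{sym}^2 f)\,\zeta(2w)^{-1}\,B_{a',b'}(w)$, where $B_{a',b'}$ is a finite Euler product over the primes dividing $a'b'N_f$ which is holomorphic and $\ll (a'b')^{\ep}$ for $\re w \ge \tfrac12 + \ep$; in particular $Z_{a',b'}$ extends to $\re w > \tfrac12$ with a single simple pole, at $w = 1$, of residue $\tfrac{L(1,\mathrm{sym}^2 f)}{\zeta(2)}B_{a',b'}(1)$. I would then shift the $s$-contour to $\re s = -\delta$ for a small fixed $\delta > 0$, crossing a simple pole at $s = 0$ (from $G(s)/s$, with $G(0) = g_{\alpha,\beta}(0,t) = 1$) and one at $s = -(\alpha+\beta)/2$ (from the pole of $Z_{a',b'}$). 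On the shifted line the integrand is bounded using the polynomial growth of $\zeta(w)L(w,\mathrm{sym}^2 f)$, Stirling for $g_{\alpha,\beta}$, and the decay of $G$; after summing over $a, b \le T^\nu$ (and over $k$, via \eqref{eq:rankin}) this leftover contributes $O(T^{1-\delta'})$ for some $\delta' > 0$, which is negligible.

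It remains to sum the two residues from $D^+$ (and the two analogous ones from $D^-$) over $a, b \le T^\nu$ against the mollifier weights. The $s = 0$ residue of $D^+$ produces $\big(\int_{\R} w\big)$ times $\sum_{a,b\le T^\nu} \frac{\mu_f(a)\mu_f(b)}{\sqrt{ab}} P(\cdots)P(\cdots)\,(a')^{-\frac12-\beta}(b')^{-\frac12-\alpha}\,Z_{a',b'}(1+\alpha+\beta)$, while the $s = -(\alpha+\beta)/2$ residue produces a similar sum with $Z_{a',b'}$ replaced by its residue at $w = 1$ and an extra $t$-integral equal, after Stirling, to $(T/2\pi)^{-(\alpha+\beta)}\int_{\R} w + O(T/\log T)$. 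Writing the cutoffs $P\big(\tfrac{\log(T^\nu/a)}{\log T^\nu}\big)$ as Mellin integrals and assembling the resulting Dirichlet series over the decomposition variables $(g, a', b')$ — exactly as in the $\zeta$ computation of \cite{Young} and the $\GL_2$ computation of \cite{Bernard} — the symmetric-square $L$-factor, the factor $\zeta(2w)^{-1}$, and the ramified factors $B_{a',b'}$ all cancel against factors generated by the $\mu_f$-sums, a manifestation of the convolution identity $\sum_{d\mid n}\mu_f(d)\lambda_f(n/d) = \delta_{n=1}$ which makes $c(\alpha,\beta)$ independent of $f$. What survives is built from Riemann zeta functions alone, with a double pole at the relevant point; extracting that residue yields $\tfrac1\nu\tfrac{d^2}{dxdy}\big[T^{-\nu(\beta x + \alpha y)}\int_0^1 P(x+u)P(y+u)\,du\big]\big|_{x=y=0}$ together with the principal value $1$, and restoring the factor $\tfrac{1-T^{-2(\alpha+\beta)}}{(\alpha+\beta)\log T}$ contributed by the $D^+/D^-$ pairing gives exactly $c(\alpha,\beta)\int_{\R}w$.

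The remaining errors — the contour-shift remainders, the Stirling approximations replacing $t$ by $T$ in $X_{\alpha,\beta}$ and $g_{\alpha,\beta}$, the truncation of the Mellin integrals, and standard bounds near $\re s = 1$ for $1/\zeta(s)$, $1/L(s,\mathrm{sym}^2 f)$ and their logarithmic derivatives — are all controlled with \eqref{eq:rankin}, and together they total $O\big(T(\log\log T)^4/\log T\big)$, the $\log\log$ powers coming from the bounds at the edge of the zero-free region. Since the mechanics of the Euler-product assembly and of this error bookkeeping are identical to those of \cite{Young} and \cite{Bernard}, in the write-up I would carry out only the steps where the $\GL_2$-structure genuinely differs and cite those references for the rest. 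The main obstacle is the assembly in the third paragraph: checking that the $f$-dependent constants ($L(1,\mathrm{sym}^2 f)$ and the ramified local factors) really do cancel and that the residue computation reproduces the precise shape $\int_0^1 P(x+u)P(y+u)\,du$ in the statement — but, running parallel to the settled $\zeta$ and holomorphic-$\GL_2$ cases, this is a matter of careful bookkeeping rather than of new ideas.
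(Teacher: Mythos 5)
Your proposal is correct and follows essentially the same route as the paper, which simply cites Bernard's Proposition 4 (itself the Young-style diagonal evaluation you sketch: decompose $am=bn$ via $(a,b)$, open $V_{\alpha,\beta}$ by Mellin, shift contours past the poles at $s=0$ and $s=-(\alpha+\beta)/2$ of the Rankin--Selberg series, and assemble the mollifier sums so the $f$-dependent factors cancel). Your direct contour-shift treatment of $Z_{a',b'}(w)$ using the analytic continuation and convexity bounds for $\zeta(w)L(w,\mathrm{sym}^2 f)$ even builds in the one adjustment the paper flags for the Maa\ss\ case, namely replacing the $O(x^{3/5})$ error in Bernard's Lemma 8 by a power-saving bound obtained from a standard contour argument and convexity for $L(s,f\times f)$.
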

\begin{proof}
	Bernard \cite[Proposition 4]{Bernard} proves this in the holomorphic case, but the proof is identical in the non-holomorphic setting apart from a minor detail in the proof of \cite[Lemma 8]{Bernard}.  In particular, the error term $O(x^{3/5})$ in the first displayed equation in the proof of Lemma~8 is not known in the non-holomorphic case.  One can achieve a power-saving error term for fixed $f$ using a standard contour integral calculation and the convexity bound for $L(s,f\times f)$, so the proof of Lemma 8 holds in the non-holomorphic case after minor adjustments.
\end{proof}

It remains to show that the contribution from the off-diagonal terms is bounded above by the error term in Proposition \ref{prop:diag}.
In the next three sections we will prove the following upper bound for the off-diagonal sums.

\begin{proposition}
\label{prop:off-diag}
Let $\alpha,\beta$ be complex numbers satisfying $\alpha,\beta\ll 1/\log T$ and $|\alpha+\beta|\gg 1/\log T$.
Then for any $a,b\in \N$ we have
\begin{equation} \label{eq:N-a-b-upper-bound}
	N_{a,b}^{\pm}(\alpha,\beta) \ll (ab)^{\frac 12} T^{\frac 12+\theta} (abT)^\ep.
\end{equation}
\end{proposition}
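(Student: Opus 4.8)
The plan is to estimate each off-diagonal sum $N_{a,b}^{\pm}(\alpha,\beta)$ by first using the integral representation of $V_{\alpha,\beta}(mn,t)$ from \cref{lem:approx_functional} to expose the relevant length scale, and then executing the $t$-integral to convert the expression into a \emph{shifted convolution sum}. Concretely, I would insert the contour integral $V_{\alpha,\beta}(mn,t)=\frac{1}{2\pi i}\int_{(1)}\frac{G(s)}{s}g_{\alpha,\beta}(s,t)(mn)^{-s}\,ds$, move the $t$-integral inside, and recognize that $\int_\R w(t)\big(\tfrac{bn}{am}\big)^{it}g_{\alpha,\beta}(s,t)\,dt$ is (by repeated integration by parts, using $w^{(j)}\ll(\log T/T)^j$ and the known polynomial growth of the archimedean factors $g_{\alpha,\beta}$ in $t$) negligibly small unless $\big|\log(bn/am)\big|\ll T^{\ep-1}$, i.e.\ unless $|bn-am|\ll ab\cdot(mn)^{1/2}T^{\ep-1}$ roughly, while the decay of $G(s)$ confines $mn\ll T^{1+\ep}$. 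After trivially bounding the surviving archimedean and $G$-factors and the integral $\int w(t)\,dt\ll T$, one is left with a sum of the shape
\[
N_{a,b}^{\pm}(\alpha,\beta)\ll T^{1+\ep}\sum_{\substack{mn\ll T^{1+\ep}\\ 0<|am-bn|\ll (ab)^{1/2}(mn)^{1/2}T^{\ep-1}}}\frac{|\lambda_f(m)\lambda_f(n)|}{(mn)^{1/2}},
\]
up to the usual bookkeeping with the small parameters $\alpha,\beta$ and the choice of line of integration.

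The second and main step is to bound this shifted convolution sum. I would dyadically decompose over the sizes of $m$ and $n$ (say $m\sim M$, $n\sim N$ with $MN\ll T^{1+\ep}$ and, from the near-coincidence $am\approx bn$, the constraint $aM\asymp bN$), set the shift $h=am-bn$ which ranges over $0<|h|\ll (ab)^{1/2}(MN)^{1/2}T^{\ep-1}$, and estimate
\[
\sum_{h}\ \sum_{\substack{m\sim M,\ n\sim N\\ am-bn=h}}\lambda_f(m)\lambda_f(n).
\]
This is exactly the $\GL_2\times\GL_2$ shifted convolution problem, and the plan is to invoke the spectral bound of Blomer and Harcos \cite{bh-gafa}: their estimate for $\sum_{am-bn=h}\lambda_f(m)\lambda_f(n)W(m/M,n/N)$ saves a power of the moduli and yields the Hecke-eigenvalue-to-$\theta$ transfer, producing a bound of the form (moduli)$^{\ep}\cdot(MN)^{1/2+\theta+\ep}$ after summing over $h$ and accounting for the density of admissible shifts. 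Inserting this into the dyadic sum, using $|\lambda_f|\le d(n)n^\theta$ or \cref{eq:rankin} to handle the off-diagonal logarithmic pile-up, and summing the $O((\log T)^2)$ dyadic blocks, the $T^{1+\ep}$ prefactor combined with the $(MN)^{-1/2}$ weight and the $(MN)^{1/2+\theta}$-type save collapses to $(ab)^{1/2}T^{1/2+\theta}(abT)^\ep$, which is precisely \cref{eq:N-a-b-upper-bound}. The condition $\nu<\frac14-\frac\theta2$ never enters here — it enters only when this bound is fed back into the sum over $a,b\le T^\nu$ in \cref{thm:main_theorem}, where $\sum_{a,b\le T^\nu}(ab)^{1/2}/(ab)\cdot(ab)^{1/2}T^{1/2+\theta}\ll T^{2\nu+1/2+\theta+\ep}$ must beat $T/\log T$.

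I expect the shifted convolution estimate to be the crux, and within it the delicate point is making the spectral input of \cite{bh-gafa} uniform in the moduli $a,b$ (which here can be as large as $T^\nu$) and in the shift $h$, with the exponent of the moduli kept small enough — $(ab)^{1/2+\ep}$ rather than a larger power — so that the final sum over $a,b$ still converges in the target range. A secondary technical nuisance is the treatment of the ``$-$'' terms, where the extra factor $X_{\alpha,\beta}(t)$ appears: since $|X_{\alpha,\beta}(t)|=1$ on the relevant range and its logarithmic derivative in $t$ is $O(\log T)$, the integration-by-parts argument that produces the shift localization goes through unchanged, but this must be checked. Everything else — the contour shifts, the integration by parts in $t$, the dyadic decomposition, and the final summation — is routine and I would only sketch it, referring to \cite{Young, Bernard} for the analogous manipulations.
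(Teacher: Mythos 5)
Your overall architecture matches the paper's: localize to $mn\le T^{O(1)}$ and $|am-bn|$ small, decompose dyadically, and treat the resulting shift-averaged convolution sums by the spectral method of Blomer--Harcos. But the step you call the crux is exactly where your proposal has a genuine gap: you invoke Blomer--Harcos only as a \emph{per-shift} bound for $\sum_{am-bn=h}\lambda_f(m)\lambda_f(n)W$, and then sum over $h$ trivially (``accounting for the density of admissible shifts''). That does not give \eqref{eq:N-a-b-upper-bound}. Quantitatively: using the decomposition of \cref{thm:BH} together with the pointwise bound \eqref{eq:lambda-j-t-theta-bound} and the $L^1$ bound \eqref{eq:L1} for each individual $h$, a single shift contributes roughly $(ab)^{\frac12+\ep}A_k^{\theta-\frac12}\lVert W_1\rVert\lVert W_2\rVert$, and summing over $|h|\le H\asymp A_kT^{-1+\delta}$ produces $(ab)^{\frac12}A_k^{\frac12}T^{2\sigma+\theta+\ep}$ in place of the bound of \cref{prop:shift-conv}; carried through the $(k,\ell)$-sum this yields only $N^+_{a,b}\ll (ab)^{\frac34}T^{\frac12+\theta+\ep}$, and in \cref{cor:off_diag} that forces $\nu<\frac{1-2\theta}{5}$, which is weaker even than Bernard's range \eqref{eq:nu-range}. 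In other words, a single-shift bound summed trivially is essentially what Bernard already had via Blomer's work; the whole improvement of the paper comes from exploiting cancellation \emph{in the $h$-average}. Concretely, the paper keeps the full spectral expansion in $h$, Mellin-separates $h$ from the weights $W_{g,t}$, and applies the spectral large sieve \eqref{eq:large-sieve} to $\sum_{|h|\le H}\lambda_g^{(t)}(h)h^{-\frac12-u}$ averaged over the forms $g$ with $|r_g|\le R$ (the large eigenvalues and the Eisenstein spectrum being handled by \eqref{eq:L1} and \eqref{eq:L1-eis} with extra $\lambda_g$-decay), which is what produces the $(ab)^{\frac14}T^{\frac12}+A_k^{\frac12}$ structure and hence $(ab)^{\frac12}T^{\frac12+\theta}$. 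Your proposal contains no mechanism of this kind, and your own bookkeeping does not close: inserting your claimed ``$(\mathrm{moduli})^\ep(MN)^{\frac12+\theta}$ after summing over $h$'' into your displayed reduction gives $T^{1+\ep}(MN)^{\theta}$, which is nowhere near $(ab)^{\frac12}T^{\frac12+\theta}$.

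Two smaller corrections. First, for a degree-two $L$-function the analytic conductor at height $t$ is $\asymp t^2$, so the approximate functional equation truncates at $mn\le T^{2+\delta}$, not $mn\ll T^{1+\ep}$; the dyadic blocks therefore reach $A_k\asymp T^{1+\delta/2}\sqrt{ab}$, and uniformity in blocks of that size is precisely where the $ab$-aspect of the spectral input matters. Second, your displayed reduction carries $|\lambda_f(m)\lambda_f(n)|$ in absolute value; once you do that you can no longer apply any spectral decomposition, since the entire saving comes from cancellation in the shifted convolution sum. You implicitly drop the absolute values in the next step, but the reduction must be stated with the smooth weights (as in \eqref{eq:shift-conv-sum}) from the start.
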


This immediately settles the contribution from the off-diagonal terms.

\begin{corollary}
\label{cor:off_diag}
For $\nu$ satisfying \eqref{eq:nu-interval} we have
	\begin{equation}
		\sum_{a,b\leq T^\nu} \frac{\mu_f(a)\mu_f(b)}{\sqrt{ab}} P\Big( \frac{\log(T^\nu/a)}{\log T^\nu} \Big) P\Big( \frac{\log(T^\nu/b)}{\log T^\nu} \Big) \sum_{\pm} N_{a,b}^{\pm}(\alpha,\beta) \ll T^{1-\ep}.
	\end{equation}
\end{corollary}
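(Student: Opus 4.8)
The plan is to derive the Corollary directly from Proposition~\ref{prop:off-diag}, with only a crude estimate of the resulting arithmetic sum. First I would bound the polynomial factors trivially: since $P\in\R[x]$ is fixed and $\log(T^\nu/a)/\log T^\nu\in[0,1]$ whenever $a\le T^\nu$, we have $P\big(\log(T^\nu/a)/\log T^\nu\big)\ll1$ uniformly, and likewise for $b$, so the left-hand side of the Corollary is
\[
	\ll\sum_{a,b\le T^\nu}\frac{|\mu_f(a)\mu_f(b)|}{\sqrt{ab}}\sum_{\pm}\big|N_{a,b}^{\pm}(\alpha,\beta)\big|.
\]
Next I would insert the bound $N_{a,b}^{\pm}(\alpha,\beta)\ll(ab)^{1/2}T^{1/2+\theta}(abT)^\ep$ of Proposition~\ref{prop:off-diag}. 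The key feature is that the factor $(ab)^{1/2}$ exactly cancels the $1/\sqrt{ab}$ coming from the mollifier coefficients; after absorbing $(ab)^\ep\le T^{2\nu\ep}$ into $T^\ep$, this leaves
\[
	\ll T^{1/2+\theta+\ep}\Big(\sum_{a\le T^\nu}|\mu_f(a)|\Big)^{2}.
\]

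It remains to estimate $\sum_{a\le Y}|\mu_f(a)|$. For this I would use the bound $\sum_{a\le Y}|\mu_f(a)|\ll_{f,\ep}Y^{1+\ep}$, which is the exact analogue for $\mu_f$ of the Rankin--Selberg bound~\eqref{eq:rankin}: the Dirichlet series $\sum_{a\ge1}|\mu_f(a)|^2a^{-s}$ factors as $\zeta(s)L(s,\mathrm{Sym}^2 f)H(s)$ with $H(s)$ absolutely convergent for $\re(s)>\tfrac12$, so it is holomorphic for $\re(s)\ge1$ apart from a simple pole at $s=1$; a standard Tauberian argument gives $\sum_{a\le Y}|\mu_f(a)|^2\ll Y$, and Cauchy--Schwarz then yields the claim. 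Feeding this back, the left-hand side of the Corollary is $\ll T^{\,\frac12+\theta+2\nu+\ep}$.

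Finally, the range~\eqref{eq:nu-interval}, $0<\nu<\tfrac14-\tfrac\theta2$, is precisely the condition $\tfrac12+\theta+2\nu<1$; writing $\tfrac12+\theta+2\nu=1-2\delta$ with $\delta>0$ and choosing $\ep<\delta$ converts the bound into $\ll T^{1-\delta}$, which has the desired shape $T^{1-\ep}$. There is no genuine obstacle in this deduction, but the one point that requires care is that the pointwise Kim--Sarnak-type bound $|\mu_f(n)|\ll n^{\theta+\ep}$ is \emph{not} sufficient: it would only give $\sum_{a\le T^\nu}|\mu_f(a)|\ll T^{\nu(1+\theta)+\ep}$, hence the strictly smaller admissible range $\nu<\tfrac{1-2\theta}{4(1+\theta)}$, and it is the use of the average (Rankin--Selberg) bound for $\mu_f$ that recovers the full interval~\eqref{eq:nu-interval} when $\theta>0$.
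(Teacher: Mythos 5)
Your proof is correct and follows essentially the same route as the paper: bound the polynomial factors trivially, insert Proposition \ref{prop:off-diag} so that the $(ab)^{1/2}$ cancels the mollifier's $1/\sqrt{ab}$, and control $\sum_{a\le T^\nu}|\mu_f(a)|$ on average to get $T^{\frac12+\theta+2\nu+\ep}\ll T^{1-\ep}$ for $\nu<\frac14-\frac\theta2$. The only (harmless) difference is in the treatment of $\mu_f$: the paper uses $|\mu_f(a)|\ll\max(1,|\lambda_f(a)|)$ together with the Rankin--Selberg bound \eqref{eq:rankin}, whereas you obtain $\sum_{a\le Y}|\mu_f(a)|\ll Y^{1+\ep}$ directly from the factorization $\sum_n|\mu_f(n)|^2n^{-s}=\zeta(s)L(s,\mathrm{Sym}^2 f)H(s)$ (note $H$ converges absolutely only for $\re(s)>\tfrac12+\theta$, which still suffices) and Cauchy--Schwarz -- an equivalent, equally valid variant, and your closing remark correctly identifies why the pointwise Kim--Sarnak bound alone would not recover the full range \eqref{eq:nu-interval}.
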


\begin{proof}[Proposition \ref{prop:off-diag} implies Corollary \ref{cor:off_diag}]
	First we note that $|\mu_f(a)|\ll \max(1,|\lambda_f(a)|)$.
	We then apply Proposition \ref{prop:off-diag} and bound everything else trivially to obtain
	\begin{multline}
		\sum_{a,b\leq T^\nu} \frac{\mu_f(a)\mu_f(b)}{\sqrt{ab}} P\Big( \frac{\log(T^\nu/a)}{\log T^\nu} \Big) P\Big( \frac{\log(T^\nu/b)}{\log T^\nu} \Big) \sum_{\pm} N_{a,b}^{\pm}(\alpha,\beta) \\
		\ll T^{\frac 12+\theta+\ep} \Big(\sum_{a\leq T^\nu} 1 + \sum_{a\leq T^\nu} |\lambda_f(a)| \Big)^2 \ll T^{\frac 12+\theta + 2\nu +\ep},
	\end{multline}
	where we used the Rankin-Selberg bound \eqref{eq:rankin} in the last step.
	If $\nu \leq \frac14 - \frac\theta 2-\ep$, then the latter expression is bounded by $T^{1-\ep}$.
\end{proof}

\begin{proof}[Proof of Theorem \ref{thm:main_theorem}]
	This follows immediately from Proposition \ref{prop:diag} and Corollary \ref{cor:off_diag}.
\end{proof}

Our main goal for the rest of the paper is to  prove Proposition \ref{prop:off-diag}.

\section{Preliminaries for the proof of Proposition \ref{prop:off-diag}}

% The following lemma is a direct consequence of \cite[(5.11.13)]{DLMF}.

% \begin{lemma} \label{lem:X-}
% There exist polynomials $F_j(x,y,z)$ for $j\geq 1$ such that for large $t$ we have
% \begin{equation}
% 	X^{-}_{\alpha,\beta}(t) = \pfrac{t\sqrt N}{2\pi}^{-2(\alpha+\beta)} \Big(1 + \frac{F_1(\alpha,\beta,k)}{t} + \ldots + \frac{F_\ell(\alpha,\beta,k)}{t^\ell} + O\pfrac{1}{t^{\ell+1}} \Big)
% \end{equation}
% for any fixed $\ell\geq 1$.
% \end{lemma}

Before we begin the proof of Proposition \ref{prop:off-diag}, there are a few simplifications we can make to the off-diagonal sums $N^{\pm}_{a,b}(\alpha,\beta)$. We will work only with $N^+_{a,b}(\alpha,\beta)$ since the corresponding estimates for $N^-_{a,b}(\alpha,\beta)$ are essentially identical in light of the asymptotics for $V_{\alpha,\beta}(x,t)$ and $X_{\alpha,\beta}(t)$ that follow from Stirling's formula.  See \cite[Corollary 5]{Bernard}, for instance.

First, we show that the terms with $mn$ large or $am$ far from $bn$ contribute negligibly to the off-diagonal terms.
% For convenience let
% \begin{equation}
% 	\mu := \max\{|\re(\alpha)|,|\re(\beta)|\}.
% \end{equation}
% Note that for any $\ep>0$ we can assume that $T$ is large enough so that
% \begin{equation}
% 	\mu < \ep.
% \end{equation}
% We will frequently use this fact without comment.

% \nick{We're going to take $\gamma=1-\ep $ at the end, and I think it's pretty clear right now that we want to choose $\gamma$ as large as possible, so we might think about removing the dependence on $\gamma$ entirely to simplify things.}

\begin{lemma} \label{lem:N-a-b-approx}
Let $a,b,\alpha,\beta$ be as in the statement of Proposition \ref{prop:off-diag} and fix $\delta>0$.  Define
\begin{equation}
	\mathcal M(a,b) = \Big\{(m,n)\in \Z : am\neq bn, mn\leq T^{2+\delta}, \text{ and } \, \mfrac{am}{bn}=1+O(T^{-1+\delta})\Big\}.
\end{equation}
If $\ep>0$, then we have
\begin{equation} \label{eq:N-a-b-approx}
	N_{a,b}^{+}(\alpha,\beta) = \sum_{(m,n) \in \mathcal M(a,b)} \frac{\lambda_f(m)\lambda_f(n)}{m^{\frac 12+ \alpha}n^{\frac 12+\beta}} \int_{-\infty}^{\infty} w(t) \pfrac{bn}{am}^{it} V_{\alpha,\beta}(mn,t) \, dt + O_{\delta}(T^{-1+\ep}),
\end{equation}
\end{lemma}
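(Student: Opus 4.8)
The plan is to isolate the two localizing features of the off-diagonal sum $N_{a,b}^+(\alpha,\beta)$: the rapid decay of $V_{\alpha,\beta}(mn,t)$ in the variable $mn$, and the oscillation of the factor $(bn/am)^{it}$ against the slowly varying weight $w(t)$. First I would recall the standard bounds on $V_{\alpha,\beta}(x,t)$ that come from shifting the contour in the Mellin integral defining it and invoking Stirling's formula for the archimedean factors $g_{\alpha,\beta}(s,t)$: for $t \asymp T$ one has $V_{\alpha,\beta}(x,t) \ll (x/t^2)^{-A}$ for any $A>0$ once $x \gg t^{2+\delta}$ (and $V_{\alpha,\beta}(x,t) \ll 1$ always, up to $\ep$-powers). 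Since $w$ is supported on $[T/4,2T]$, this lets me truncate the $(m,n)$-sum to $mn \leq T^{2+\delta}$ at the cost of an error $O_\delta(T^{-1+\ep})$: the tail sum is bounded by $\sum_{mn > T^{2+\delta}} |\lambda_f(m)\lambda_f(n)| (mn)^{-1/2+\ep} (mn/T^2)^{-A}$, which using the Rankin--Selberg bound \eqref{eq:rankin} (in the form $\sum_{mn \leq x}|\lambda_f(m)\lambda_f(n)| \ll x^{1+\ep}$) converges and is $\ll T^{-A'}$ for suitable $A$.

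Second I would handle the constraint $am/bn = 1 + O(T^{-1+\delta})$. Here the idea is integration by parts in $t$. On the pairs $(m,n)$ with $am \neq bn$, write $(bn/am)^{it} = \exp(it \log(bn/am))$ and integrate by parts $J$ times in the $t$-integral, throwing the derivatives onto $w(t) V_{\alpha,\beta}(mn,t)$. Each integration by parts produces a factor $1/|\log(am/bn)|$ and, from $w^{(j)}(t) \ll (\log T / T)^j$ together with the bounds on $\partial_t^j V_{\alpha,\beta}(mn,t)$ (again from Stirling, these cost at most $(\log T)^j$ or a negligible power of $t$), a factor $\ll (T^{-1+\ep})^J$ times the measure $T$ of the support. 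So the contribution of a pair is $\ll T^{\ep} \cdot T \cdot (T^{-1}|\log(am/bn)|^{-1})^J$. When $|\log(am/bn)| \gg T^{-1+\delta}$ this is $\ll T \cdot T^{-\delta J + \ep}$, which is $\ll T^{-1+\ep}$ for $J$ large. Summing the negligible pieces over all $(m,n)$ with $mn \leq T^{2+\delta}$ — there are $\ll T^{2+\delta+\ep}$ such pairs, and each $|\lambda_f(m)\lambda_f(n)| (mn)^{-1/2}$ is $\ll 1$ on average — the total error is still $O_\delta(T^{-1+\ep})$ after choosing $J = J(\delta)$ appropriately. Combining the two truncations gives exactly \eqref{eq:N-a-b-approx} with the stated set $\mathcal M(a,b)$.

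The main obstacle, I expect, is bookkeeping the $a,b$-dependence and the dependence on $\alpha,\beta$ uniformly: the implied constants in the Stirling estimates for $V_{\alpha,\beta}$ and in the integration-by-parts bounds must not secretly depend on $a,b$ (they should not, since $a,b$ enter only through the harmless phase $(bn/am)^{it}$ and not through $V$), and one must be careful that the truncation $mn \leq T^{2+\delta}$ is genuinely uniform — but since $\alpha,\beta \ll 1/\log T$, the archimedean factor $g_{\alpha,\beta}(s,t)$ is a bounded perturbation of the $\alpha=\beta=0$ case and causes no trouble. A secondary technical point is that to run the integration by parts cleanly one wants to know $\partial_t V_{\alpha,\beta}(x,t)$ is controlled; this follows by differentiating under the Mellin integral and using that $\frac{d}{dt}\log g_{\alpha,\beta}(s,t) \ll \log T$ uniformly for $x \ll T^{2+\delta}$, again from Stirling. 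None of this is deep; it is the standard "opening the off-diagonal" step, and the only real care needed is to keep every error term polynomial in $(abT)^\ep$ times a negative power of $T$, which the argument above does.
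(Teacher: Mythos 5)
Your proposal is correct and is essentially the paper's argument: the proof here simply defers to Lemma~3 of Bernard, whose content is exactly your two steps (truncation of $mn$ via the rapid decay of $V_{\alpha,\beta}(x,t)$ for $x\gg t^{2+\delta}$, then repeated integration by parts in $t$ against the phase $(bn/am)^{it}$ using $w^{(j)}(t)\ll(\log T/T)^j$), with the Rankin--Selberg bound \eqref{eq:rankin} replacing the Ramanujan bound since $f$ may be a Maa\ss{} form. Your handling of the averaged coefficient bounds and the uniformity in $a,b,\alpha,\beta$ matches the intended adaptation, so no gap.
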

\begin{proof}
	We proceed just as in \cite[Lemma 3]{Bernard}, though we use \eqref{eq:rankin} in place of the Ramanujan bound for holomorphic newforms.
\end{proof}

Next, we introduce a dyadic partition of unity as follows.
Let $\rho:(0,\infty)\to\R$ be a smooth function, compactly supported in $[1,2]$ such that
\begin{equation}
	\sum_{\ell\in \Z} \rho(2^{-\ell/2} x) = 1.
\end{equation}
% Let
% \[\rho_\ell(x) = \rho\Big(A_\ell^{-1}x\Big).\] 
% where $A_\ell = 2^{\ell/2}T^\gamma$ for some $\gamma \in (0,1)$.
We use this partition to write
% \begin{equation}
% 	1 = \sum_{k,\ell\in \Z} \rho_k(am)\rho_\ell(bn).
% \end{equation}
\begin{equation}
	\sum_{(m,n)\in \mathcal M(a,b)} F(m,n) = \sum_{(k,\ell)\in \mathcal S} \sum_{am\neq bn} F(m,n) \rho\pfrac{am}{A_k}\rho\pfrac{bn}{A_\ell},
\end{equation}
for a suitable set $\mathcal S$, where $A_k = 2^{k/2}T^{1-\delta}$ and $F(m,n)$ denotes the summand in \eqref{eq:N-a-b-approx}.
Here $\delta$ is a fixed small positive number.
% Then the off-diagonal terms can be written
% \begin{align}
% 	N_{a,b}^{\pm} (\alpha,\beta) 
% 	&= \sum_{\ell_1,\ell_2} \sum_{h\neq 0} \sum_{am-bn=h} \frac{\lambda_f(m)\lambda_f(n)}{m^{\tfrac 12\pm\alpha}n^{\frac12\pm \beta}} \int_{-\infty}^{\infty} w(t) \rho_{\ell_1}(am)\rho_{\ell_2}(bn) \pfrac{bn}{am}^{it} X_{\alpha,\beta}^{\pm}(t) V_{\alpha,\beta}^{\pm} (mn,t) \, dt.
% \end{align}
As explained in the proof of Lemma~4 of \cite{Bernard}, we have
\begin{equation} \label{eq:S-def}
	\mathcal S = \Big\{ (k,\ell) \in \Z^2 : \tfrac 12T^{1-\delta}\leq A_k\asymp A_\ell \leq 2T^{1+\delta/2}\sqrt{ab} \Big\}
\end{equation}
and
\begin{equation} \label{eq:Hkl-def}
	1\leq |am-bn|\leq H_{k,\ell} := T^{-1+\delta}\sqrt{A_k A_\ell}.
\end{equation}
So we can write the off-diagonal terms as
\begin{multline} \label{eq:Nab-W1-W2}
	N_{a,b}^{+}(\alpha,\beta) = \frac{1}{2\pi i}\int_{\sigma-i\infty}^{\sigma+i\infty} \frac{G(s)}{s}
	\sum_{(k,\ell)\in \mathcal S} \frac{a^{s+\alpha} b^{s+\beta}}{A_k^{2s+\alpha+\beta}} \\ 
	\times \sum_{1\leq |h|\leq H_{k,\ell}} \sum_{am-bn=h} \frac{\lambda_f(m)\lambda_f(n)}{\sqrt{mn}} W_1\pfrac{am}{A_k} W_2 \pfrac{bn}{A_k} \, dsdt + O(T^{-1+\ep})
\end{multline}
for any $\sigma>0$,
where
\begin{equation} \label{eq:W1-W2-def}
	W_1(x) = \frac{\rho(x)}{x^{s+\alpha}}, \qquad W_2(y) = \frac{\rho(y A_k/A_\ell)}{y^{s+\beta}} \int_{-\infty}^\infty w(t) \Big( 1 + \frac{h/A_k}{y} \Big)^{-it}g_{\alpha,\beta}(s,t)\, dt.
\end{equation}
Thus we are led to the averaged shifted convolution problem of bounding the sums
\begin{equation} \label{eq:shift-conv-sum}
	\sum_{1\leq |h|\leq H} \sum_{am-bn=h} \frac{\lambda_f(m)\lambda_f(n)}{\sqrt{mn}} W_1\pfrac{am}{A_k} W_2 \pfrac{bn}{A_k}
\end{equation}
uniformly in the parameters $a,b,H$.
This is the subject of the next section.

\section{Proof of \cref{prop:off-diag}}

At this point, our treatment diverges from that of \cite{Bernard}, where the shifted convolution sum \eqref{eq:shift-conv-sum} is bounded using work of Blomer \cite{Blomer}.  In order to obtain a stronger estimate for \eqref{eq:shift-conv-sum}, we spectrally decompose \eqref{eq:shift-conv-sum} using the elegant ideas of Blomer and Harcos \cite{bh-duke,bh-gafa}.

\subsection{Preliminaries}
Before we state the result of Blomer and Harcos, we recall some background on the spectrum of the hyperbolic Laplacian and fix some notation.  Let $\Gamma\subseteq \SL_2(\Z)$ be a congruence subgroup.
The weight $k$ Petersson inner product is defined as
\begin{equation}
	\langle g,h \rangle = \iint_{\Gamma\backslash \mathbb H} g(z) \overline{h(z)} y^k \frac{dxdy}{y^2}.
\end{equation}
Here $z=x+iy$ and $g,h$ are holomorphic cusp forms of weight $k\geq 2$ or Maa\ss~ cusp forms (of weight $k=0$).
In what follows, we will always assume that each cusp form $g$ (holomorphic or Maa\ss) in a given basis is spectrally normalized so that $\langle g,g \rangle = 1$.
In particular, our newforms will not generally have their first Fourier coefficient equal to $1$.

% I'm following the normalizations in \cite{bh-gafa} but rewriting everything in terms of the classical notation, etc (so, basically the notation from \cite{ik}).
Let $\mathcal V(M)$ denote the set of spectrally normalized Hecke--Maa\ss~ newforms of level $M$, i.e.~the normalized Maa\ss~ cusp forms on $\Gamma_0(M)$ which are eigenforms for all of the Hecke operators $T_n$ (not just those with $(n,M)=1$).
We also assume that each $g\in \mathcal V(M)$ satisfies $g(-x+iy) = \ep_g g(x+iy)$ with $\ep_g=\pm 1$.
The vector space of Maa\ss~ cusp forms of level $N$ decomposes as
\begin{equation}
	\bigoplus_{M\mid N} \bigoplus_{g\in \mathcal V(M)} \bigoplus_{d\mid \frac NM} g\big|_d \cdot \C,
\end{equation}
where $g\sl_d(z) := g(dz)$.
The first two direct sums are orthogonal, but the innermost direct sum is not orthogonal in general.  Thus we follow \cite{bm-gafa} in choosing an orthonormal basis given by a Gram--Schmidt orthogonalization process:
\begin{equation}
	\bigoplus_{d\mid \frac NM} g\big|_d \cdot \C = \operatorname{span} \bigg\{ g^{(t)} := \sum_{s\mid t} \alpha_{t,s} g\big|_{s} \  : \  t\mid \mfrac NM  \bigg\}.
\end{equation}
The coefficients $\alpha_{t,s}$ are defined in \cite[Eqn. 40]{bh-gafa} (see also \cite[Section 5]{bm-gafa} for an explicit description).
To ease notation, we define the disjoint union
\begin{equation}
	\mathcal U(N) = \bigsqcup_{M\mid N} \mathcal V(M).
\end{equation}
% and let $\{u_j\}_{j\geq 1}$ be the elements of $\mathcal U(N)$, ordered by spectral parameter $r_j$.
Let $N_g$ denote the unique level $N$ for which the newform $g$ lies in $\mathcal V(N)$.
Each $g\in \mathcal V(N)$ has a Fourier expansion of the form
\begin{equation}
	g(z) = \sqrt y \, \sum_{n\neq 0} \rho_g(n) K_{r_g}(2\pi |n|y) e^{2\pi i n x},
\end{equation}
where $K_\nu(y)$ is the usual $K$-Bessel function and
\begin{equation}
	\lambda_g := \tfrac 14-r_g^2, \qquad r_g\in i\R_{\geq 0} \cup (0,\tfrac 12),
\end{equation}
is the Laplace eigenvalue of $g$, normalized by
\begin{equation}
	(\Delta_0 + \lambda_g) g := \Big( y^2\Big( \mfrac{\partial^2}{\partial x^2} + \mfrac{\partial^2}{\partial y^2} \Big) + \lambda_g \Big) g = 0.
\end{equation}
Furthermore, the coefficients satisfy
\begin{equation}
	\rho_g(n) = \rho_g(1) \lambda_g(n) \qquad \text{ for all }n \geq 1,
\end{equation}
where $\lambda_g(n)$ is the eigenvalue of the Hecke operator $T_n$.
The Hecke eigenvalues of the functions $g^{(t)}$ are given by
\begin{equation}
	\lambda_g^{(t)}(n) := \sum_{s\mid (t,n)} \alpha_{t,s} \sqrt s \, \lambda_g(n/s).
\end{equation}
From \cite[Remark~6]{bh-gafa}, when $N_g$ is squarefree, we have the bound
\begin{equation}
	\alpha_{t,s} \ll t^\ep (t/s)^{\theta-1/2}.
\end{equation}
This also holds when $N_g$ is not squarefree by \cite[Eqn. 5.6]{bm-gafa}.
It follows that we have the pointwise bound
\begin{equation} \label{eq:lambda-j-t-theta-bound}
	\lambda_g^{(t)}(n) \ll n^{\theta+\ep } (t,n)^{1/2-\theta}.
\end{equation}

The setup is similar for the holomorphic cusp forms.
For $\ell\geq 2$, let $\mathcal H_\ell(N)$ denote the set of weight $\ell$ newforms of level dividing $N$.
Each $g\in \mathcal{H}_{\ell}(N)$ has a Fourier expansion of the form
\begin{equation}
	g(z) = \sum_{n\geq 1} \rho_g(n) n^{\frac{\ell-1}{2}} e(nz),
\end{equation}
where 
\begin{equation}
	\rho_g(n) = \lambda_g(n)\rho_g(1),
\end{equation}
and $\lambda_g(n)$ is the $n$-th Hecke eigenvalue of $g$.
As before, let $N_g$ denote the level of $g$.
We apply a similar Gram--Schmidt orthonormalization as above to construct $g^{(t)}$ for each divisor $t$ of $N/N_g$, with coefficients $\lambda_g^{(t)}(n)$.
These satisfy the pointwise bound \eqref{eq:lambda-j-t-theta-bound} with $\theta$ replaced by $0$.
Lastly, the function $y^{\ell/2}g(z)$ is an eigenform of the weight $\ell$ hyperbolic Laplacian
\begin{equation}
	\Delta_\ell := y^2\Big( \mfrac{\partial^2}{\partial x^2} + \mfrac{\partial^2}{\partial y^2} \Big) - i\ell y \mfrac{\partial}{\partial x}
\end{equation}
with eigenvalue
\begin{equation}
	\lambda_g = \tfrac{\ell}{2}\Big(1-\tfrac{\ell}{2}\Big) = \tfrac 14 - \ptfrac{\ell-1}{2}^2 =: \tfrac 14-r_g^2, \qquad r_g>0.
\end{equation}

For detailed background on the Eisenstein series, we refer the reader to \cite{iwaniec-spectral}.
All we need here is that for each singular cusp $\mathfrak a$ of $\Gamma_0(N)$, there is an Eisenstein series $E_{\mathfrak a}(z,s)$ with Fourier expansion
\begin{equation}
	E_{\mathfrak a}\Big(z,\tfrac 12+ir\Big) = \delta_{\mathfrak a=\infty} y^{\frac 12+ir} + \tau_{\mathfrak a}(0,r) y^{\frac 12-ir} + 2\sqrt y \sum_{n\neq 0} \tau_{\mathfrak a}(n,r) K_{ir}(2\pi |n|y) e^{2\pi i n x}.
\end{equation}
% Here $\tau_{\mathfrak a}(n,r) = \varphi_{\mathfrak a,\infty}(n,\frac 12+ir)$, where
% \begin{equation}
% 	\varphi_{\mathfrak a,\infty}(n,s) = \frac{2\pi^s}{\Gamma(s)} |n|^{s-\frac 12} \sum_{c\in \mathcal C_{\infty,\mathfrak a}} \frac{S_{\infty,\mathfrak a}(0,n;c)}{c^{2s}}
% \end{equation}
% for $\re(s)>1$, and we extend by analytic continuation.
% \nick{I don't think we need any explicit computations involving Eisenstein series, but I put this here to remind me of the normalizations in case we need it later.}
Blomer and Harcos \cite[Sections 2.6--2.7]{bh-gafa} describe an explicit linear combination of Eisenstein series whose coefficients exhibit properties similar to those of the Gram--Schmidt bases of cusp forms described above.
Briefly, let $\mathcal E(N)$ denote the set of pairs $\tau=\{\chi,\chi^{-1}\}$, where $\chi$ is a Hecke character (over $\Q$) of conductor $N_\chi$ such that $N_\chi^2 \mid N$.
We write
\begin{equation}
	N_\tau := N_\chi^2 \quad \text{ and } \quad \lambda_\tau := \tfrac 14-r_\tau^2,
\end{equation}
where $r_\tau \in i\R$ is such that $\chi = |\cdot|^{ir_\tau}$ on $\R$.
For each $\tau\in \mathcal E(N)$ and each $t\mid N/N_\tau$, there is an Eisenstein series with coefficients $\lambda_{\tau}^{(t)}(m) = \lambda_{\chi,\chi^{-1}}^{(t)}(m)$ (see \cite[Eqn. 68]{bh-gafa}).
Here we only need that these coefficients satisfy the pointwise bound \cite[Eqn. 48]{bh-gafa}
\begin{equation} \label{eq:lambda-bound-eis}
	\lambda_{\tau}^{(t)}(m) \ll (t,m) m^\ep.
\end{equation}

 \subsection{The Petersson and Kuznetsov formulas}

We require versions of the Petersson and Kuznetsov formulas for the Gram--Schmidt bases chosen above.
In our notation, the Petersson formula (see \cite[Theorem~9.6]{iwaniec-spectral}) takes the form
{\small
\begin{equation} \label{eq:petersson}
	\frac{\Gamma(\ell-1)}{(4\pi)^{\ell-1}} \sum_{g\in \mathcal H_\ell(N)} |\rho_g(1)|^2 \sum_{t\mid N/N_g} \bar\lambda_g^{(t)}(m) \lambda_g^{(t)}(n) = \delta_{mn} + 2\pi i^{\ell} \sum_{\substack{c>0 \\ c\equiv0(N)}} \frac{S(m,n,c)}{c} J_{\ell-1}\pfrac{4\pi\sqrt{mn}}{c},
\end{equation}}%
where $J_{\nu}(x)$ is the $J$-Bessel function and $S(m,n,c)$ is the Kloosterman sum
\begin{equation}
	S(m,n,c) = \sum_{\substack{d\bmod c \\ (c,d)=1}} e\pfrac{m\bar d+nd}{c}.
\end{equation}
To state the Kuznetsov formula, we first require a test function $\phi(z)$ holomorphic in the strip $|\re(z)|<\frac 23$ which satisfies $\phi(z) \ll (1+|z|)^{-2-\ep}$ for some $\ep>0$.
% Let $\mathcal U^+(N)$ denote the even Maa\ss~ cusp forms, i.e.~those with $\ep_f = 1$.
Then for integers $m,n\geq 1$ we have (see \cite[Theorem~9.3]{iwaniec-spectral})
\begin{multline} \label{eq:kuznetsov}
	\sum_{g\in \mathcal U(N)} \frac{|\rho_g(1)|^2}{\cos \pi r_g} \phi(r_g) \sum_{t\mid N/N_g} \bar\lambda_g^{(t)}(m) \lambda_g^{(t)}( n) +  \sum_{\text{cusps }\mathfrak a} \int_{-\infty}^\infty \bar \tau_{\mathfrak a}(m,r)\tau_{\mathfrak a}( n,r) \frac{\phi(ir)}{\cosh \pi r} \, dr \\
	= \delta_{mn} \, \check\phi + \sum_{0<c\equiv 0(N)} \frac{S(m, n,c)}{c} \check\phi\pfrac{4\pi\sqrt{mn}}{c},
\end{multline}
where $\check\phi$ and $\check\phi(x)$ are the integral transforms
\begin{align}
	 \check\phi := \frac{i}\pi \int_{(0)} r\tan(\pi r) \phi(r)\, dr,\qquad \check\phi(x) := -2i \int_{(0)} J_{2r}(x) \frac{r\phi(r)}{\cos\pi r} \, dr.
	 % \phi^-(x) &:= \frac {4i}{\pi} \int_{(0)} K_{2r}(x) \phi(r) r \sin(\pi r)\, dr.
\end{align}

\subsection{Spectral decomposition of shifted convolution sums}

Following \cite[Eqn. 92]{bh-gafa}, we combine the formulas above into a hybrid Petersson-Kuznetsov formula as follows. 
Let $\varphi$ be a function defined on $\{z\in \C: |\re(z)|<\tfrac 23\} \cup (\Z + \tfrac 12)$ and holomorphic in the interior of that set.
Suppose that $\varphi$ satisfies the decay condition
\begin{equation}
	\varphi(z) \ll_{\varphi,\epsilon} (1+|z|)^{-2-\ep}
\end{equation}
for some $\ep>0$,
and define
\begin{align}
	 \check\varphi &:=\frac{i}\pi \int_{(0)} r\tan(\pi r) \phi(r)\, dr + \sum_{\ell\geq 2\text{ even}} (\ell-1)\varphi\Big(\mfrac{\ell-1}{2}\Big), \\
	 \check\varphi(x) &:= -2i \int_{(0)} J_{2r}(x) \frac{r\phi(r)}{\cos\pi r} \, dr + 2\pi\sum_{\ell\geq 2\text{ even}} (-1)^{\ell/2}(\ell-1)\varphi\pfrac{\ell-1}{2} J_{\ell-1}\pfrac{4\pi\sqrt{mn}}{c}.
\end{align}
To simplify notation, 
% let $\mathcal U^+(N)$ denote the subset of $\mathcal U(N)$ consisting of those $g$ for which $\ep_g=1$ and 
let
\begin{equation}
	\mathcal C(N) = \mathcal U(N) \bigcup \Big(\bigcup_{2\leq \ell\equiv 0(2)}\mathcal H_\ell(N)\Big).
\end{equation}
For each $g\in \mathcal C(N)$ define
\begin{equation}
	C_g = 
	\begin{dcases}
		\frac{(4\pi)^{\ell-1}}{\Gamma(\ell)|\rho_g(1)|^2} & \text{ if }g\in \mathcal H_\ell(N), \\
		\frac{\cos\pi r_g}{|\rho_g(1)|^2} & \text{ if }g\in \mathcal U(N).
	\end{dcases}
\end{equation}
We also follow \cite{bh-gafa} in abbreviating the contribution from the Eisenstein series as CSC, since its exact shape is not important for us; all we need is that $\CSC\geq 0$ when $m=n$.
The following formula is obtained by multiplying \eqref{eq:petersson} by $(\ell-1)\varphi(\frac{\ell-1}{2})$, summing over positive even $\ell$, and  adding \eqref{eq:kuznetsov}.
See \cite[Eqn. 92]{bh-gafa} and \cite[Errata, Item 5]{bh-gafa} for a generalization and a discussion of convergence issues.

\begin{proposition}
Let $m,n\geq 1$.
With the above notation and assumptions, we have
\begin{equation}
	\sum_{g\in \mathcal C(N)} C_g^{-1} \varphi(r_f) \sum_{t\mid N/N_g} \bar\lambda_g^{(t)}(m) \lambda_g^{(t)}(n) + \CSC = \delta_{mn} \check\varphi + \sum_{0<c\equiv 0(N)} \frac{S(m,n,c)}{c} \check\varphi\pfrac{4\pi\sqrt{mn}}{c}.
\end{equation}
\end{proposition}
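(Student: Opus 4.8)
The plan is to obtain this hybrid formula by superposing the Petersson formula \eqref{eq:petersson} over all even weights $\ell\geq 2$ and adding the Kuznetsov formula \eqref{eq:kuznetsov}, exactly as in \cite[Eqn.~92]{bh-gafa}. First I would take \eqref{eq:petersson}, multiply both sides by $(\ell-1)\varphi\bigl(\tfrac{\ell-1}{2}\bigr)$, and sum over even $\ell\geq 2$. On the spectral side this produces $\sum_{\ell}\sum_{g\in\mathcal H_\ell(N)} \tfrac{(4\pi)^{\ell-1}}{\Gamma(\ell)|\rho_g(1)|^2}\,\varphi\bigl(\tfrac{\ell-1}{2}\bigr)\sum_{t\mid N/N_g}\bar\lambda_g^{(t)}(m)\lambda_g^{(t)}(n)$, which by the definition of $C_g$ and the eigenvalue identity $r_g=\tfrac{\ell-1}{2}$ for holomorphic forms is precisely the holomorphic part of $\sum_{g\in\mathcal C(N)}C_g^{-1}\varphi(r_g)\sum_{t}\bar\lambda_g^{(t)}(m)\lambda_g^{(t)}(n)$. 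On the arithmetic side the $\delta_{mn}$ term contributes $\delta_{mn}\sum_{\ell}(\ell-1)\varphi\bigl(\tfrac{\ell-1}{2}\bigr)$, which is the holomorphic half of $\check\varphi$, and the Kloosterman term contributes $\sum_{0<c\equiv 0(N)}\tfrac{S(m,n,c)}{c}\cdot 2\pi\sum_{\ell}(-1)^{\ell/2}(\ell-1)\varphi\bigl(\tfrac{\ell-1}{2}\bigr)J_{\ell-1}\bigl(\tfrac{4\pi\sqrt{mn}}{c}\bigr)$, i.e.\ the holomorphic half of $\check\varphi(x)$ with $x=4\pi\sqrt{mn}/c$.

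Next I would add the Kuznetsov formula \eqref{eq:kuznetsov} with test function $\phi=\varphi\!\restriction_{\{|\re(z)|<2/3\}}$. Its spectral side contributes the Maa\ss{} sum $\sum_{g\in\mathcal U(N)}\tfrac{\cos\pi r_g}{|\rho_g(1)|^2}\varphi(r_g)\sum_{t}\bar\lambda_g^{(t)}(m)\lambda_g^{(t)}(n) = \sum_{g\in\mathcal U(N)}C_g^{-1}\varphi(r_g)\cdots$, together with the Eisenstein continuous-spectrum integral which we absorb into $\CSC$. Its arithmetic side contributes the remaining $\delta_{mn}\tfrac{i}{\pi}\int_{(0)}r\tan(\pi r)\varphi(r)\,dr$ and the Kloosterman term with the $J_{2r}$ transform. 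Combining the two spectral sides gives $\sum_{g\in\mathcal C(N)}C_g^{-1}\varphi(r_g)\sum_{t\mid N/N_g}\bar\lambda_g^{(t)}(m)\lambda_g^{(t)}(n)+\CSC$, combining the two diagonal terms gives $\delta_{mn}\check\varphi$ with $\check\varphi$ as defined, and combining the two Kloosterman transforms gives exactly the stated $\check\varphi(x)$ evaluated at $x=4\pi\sqrt{mn}/c$. This yields the asserted identity term by term.

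The main obstacle is justifying convergence and the interchange of summations. The Petersson formula holds for each fixed $\ell$, but summing over all even $\ell\geq 2$ and interchanging with the Kloosterman sum over $c$ requires control on the tail; this is precisely the convergence issue flagged in \cite[Errata, Item~5]{bh-gafa}. The decay hypothesis $\varphi(z)\ll(1+|z|)^{-2-\ep}$ on the strip $|\re(z)|<\tfrac23$ together with the holomorphy and the values at the half-integers $\Z+\tfrac12$ is exactly what is needed: the weights $(\ell-1)\varphi\bigl(\tfrac{\ell-1}{2}\bigr)$ are summable, and the uniform bound $J_{\ell-1}(x)\ll_x 1$ (indeed $J_{\ell-1}(x)\to 0$ rapidly once $\ell-1>x$) makes the double sum over $\ell$ and $c$ absolutely convergent, licensing the rearrangement. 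I would therefore cite \cite[Eqn.~92]{bh-gafa} and \cite[Errata, Item~5]{bh-gafa} for these analytic details rather than reproduce them, since the content is entirely the bookkeeping described above.
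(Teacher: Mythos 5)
Your proposal is correct and is essentially the paper's own argument: multiply the Petersson formula by $(\ell-1)\varphi\bigl(\tfrac{\ell-1}{2}\bigr)$, sum over even $\ell\geq 2$, add the Kuznetsov formula, identify the combined spectral weights with $C_g^{-1}\varphi(r_g)$ and the combined arithmetic sides with $\check\varphi$ and $\check\varphi(4\pi\sqrt{mn}/c)$, and defer the convergence/rearrangement issues to \cite[Eqn.~92]{bh-gafa} and \cite[Errata, Item 5]{bh-gafa}. The only slip is typographical: the holomorphic spectral factor you display is $\tfrac{(4\pi)^{\ell-1}}{\Gamma(\ell)|\rho_g(1)|^2}=C_g$ rather than its reciprocal $C_g^{-1}=\tfrac{\Gamma(\ell)|\rho_g(1)|^2}{(4\pi)^{\ell-1}}$, which is what multiplying $\tfrac{\Gamma(\ell-1)}{(4\pi)^{\ell-1}}|\rho_g(1)|^2$ by $(\ell-1)$ actually produces and what your stated conclusion correctly uses.
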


% Now for the Kuznetsov trace formula as in [BH].
% \nick{Actually this is the usual Kuznetsov formula with both the versions $mn>0$ and $mn<0$ added together (so the odd Maa\ss~ forms drop out) and the holomorphic spectrum added in (basically Petersson's formula for every weight).
% I think the purpose of this is to treat the Maa\ss~ forms and holomorphic forms on equal footing, but it may be just as easy to split things up as usual.}
% Let $k(r)$ be a suitable test function, and $\tilde k$, $\tilde k(x)$ denote the usual integral transforms.
% \begin{multline}
% 	\sum_{\substack{u_j \in \mathcal U(N) \\ (\text{even})}} \frac{|\rho_j(1)|^2}{\cosh \pi r_j} k(r_j) \sum_{t\mid N/N_j} \bar\lambda_j^{(t)} (m) \lambda_j^{(t)}(n) + \frac{1}{4\pi} \sum_{\text{cusps }\mathfrak a} \int_{-\infty}^\infty \bar \tau_{\mathfrak a}(m,r)\tau_{\mathfrak a}(n,r) \frac{h(r)}{\cosh \pi r} \, dr \\ + \sum_{\ell\geq 2\text{ even}}\sum_{f\in H_\ell(N)} \frac{2\pi}{L(1,\operatorname{Sym}^2 f)} k\pfrac{\ell-1}{2} \sum_{t\mid N/N_f} \bar \lambda_f^{(t)}(m) \lambda_f^{(t)}(n) \\
% 	= \delta(m,n) \tilde k + \pi \sum_{\pm} \sum_{0<c\equiv 0(N)} \frac{S(m,\pm n;c)}{c} \tilde k\pfrac{\pm mn}{c^2}
% \end{multline}

% Using the Kuznetsov formula we get (this is Lemma~6 of \cite{bh-gafa})
% \begin{equation} 
% 	\sum_{\substack{f\in \mathcal C(N) \\ |r_f|\leq X}} \sum_{t\mid N/N_f} \Big| \lambda_f^{(t)}(n) \Big|^2 \ll X^{2} \Big( N + (n,N)^{1/2}n^{1/2} \Big)(nNX)^{\ep }.
% \end{equation}
Using the Kuznetsov formula, one can obtain the following large sieve inequality (see the proof of Lemma 6 of \cite{bh-gafa} and Section 5 of \cite{DI}).
% \nick{this seems to be normalized correctly because it matches BH, Lemma 6 in the case $M=1$, $a_1=1$}:
\begin{proposition}	
Let $\{a_m\}$ be a sequence of complex numbers and let $M,X\geq 1$. Then
\begin{equation} \label{eq:large-sieve}
	\sum_{\substack{g\in \mathcal C(N) \\ |r_g|\leq X}} \sum_{t\mid N/N_g} \Big| \sum_{|m|\leq M} a_m \lambda_g^{(t)}(m) \Big|^2 \ll (NX^2 + M)(MNX)^\ep \sum_{|m|\leq M}|a_m|^2.
\end{equation}
\end{proposition}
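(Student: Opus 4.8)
The plan is to derive the inequality by duality from the Kuznetsov formula \eqref{eq:kuznetsov} (together with the Petersson formula \eqref{eq:petersson} for the holomorphic part), following the classical argument of Deshouillers--Iwaniec \cite{DI} as adapted by Blomer--Harcos in the proof of their Lemma~6. First I would fix a smooth test function $\phi$ that is nonnegative on the spectrum, is $\geq 1$ for $|r|\leq X$ (and for $r$ corresponding to holomorphic forms of weight $\leq 2X+1$), and satisfies the holomorphy and decay hypotheses required for \eqref{eq:kuznetsov}; a standard choice is something like $\phi(r) = \cos(\pi r)\,e^{(r^2-X^2)/X^2}$ suitably modified, whose transform $\check\phi(x)$ one controls by the usual stationary-phase/power-series estimates for the Bessel transform. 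Because $C_g^{-1} = |\rho_g(1)|^2/\cos\pi r_g$ (resp.\ the analogous ratio with $\Gamma(\ell)$) is $\asymp |\rho_g(1)|^2$ up to the harmless $\cos\pi r_g$ factor already accounted for on the left of \eqref{eq:kuznetsov}, and since the Eisenstein contribution on the left is nonnegative, we may bound the left side of \eqref{eq:large-sieve} (after inserting $\phi$) by the right side of the Kuznetsov/Petersson formula applied to the sequence $\{a_m\}$.

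The key steps, in order, are: (1) expand the square $\big|\sum_{|m|\leq M} a_m \lambda_g^{(t)}(m)\big|^2 = \sum_{|m|,|n|\leq M} a_m \bar a_n \lambda_g^{(t)}(m)\overline{\lambda_g^{(t)}(n)}$ and sum over $g$ and $t\mid N/N_g$ against the weight $\phi(r_g) C_g^{-1}$; (2) apply the hybrid Kuznetsov formula to the inner $(m,n)$-sum for each pair, replacing the spectral side by $\delta_{mn}\check\phi + \sum_{c\equiv 0(N)} \frac{S(m,n,c)}{c}\check\phi(4\pi\sqrt{mn}/c)$; (3) estimate the diagonal term, which gives $\check\phi \sum_{|m|\leq M}|a_m|^2 \ll X^2 \sum |a_m|^2$ since $\check\phi \ll X^2$ by its definition as an integral of $r\tan(\pi r)\phi(r)$ plus the weight-$\ell$ sum; (4) estimate the off-diagonal Kloosterman term by Weil's bound $|S(m,n,c)|\leq \tau(c)(m,n,c)^{1/2}c^{1/2}$, the support restriction $c\equiv 0\ (N)$, and the decay of $\check\phi(x)$ (which is $\ll (x/X)^{?}$ for $x$ small and decays for $x$ large), summing the geometric-type series in $c$ to produce the $M(MNX)^\ep\sum|a_m|^2$ contribution; (5) assemble to get the bound $(NX^2+M)(MNX)^\ep\sum|a_m|^2$, where the $NX^2$ comes from the diagonal after noting $c\geq N$ forces the Kloosterman sum over $c\asymp \sqrt{mn}/X \asymp M/X$ to be nonempty only when $M/X \gg N$, i.e.\ the two regimes interpolate exactly as claimed.

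The main obstacle is step (4): getting the Kloosterman term down to $M^{1+\ep}$ (rather than something like $M^{3/2}$) requires carefully exploiting both the averaging over $m,n$ and the congruence $c\equiv 0\ (N)$, and in particular using the bound for $\check\phi(x)$ in the transitional range $x\asymp X$ where neither the small-$x$ Taylor expansion nor the large-$x$ asymptotics is sharp. This is precisely the delicate point handled in \cite[Section~5]{DI} and \cite[Lemma~6]{bh-gafa}, and one must be slightly careful that the Gram--Schmidt coefficients $\alpha_{t,s}$ do not spoil the orthogonality — but in fact the left side of \eqref{eq:kuznetsov} already incorporates the sum over $t$, so no extra work beyond citing those references and tracking the dependence on $N$ and $X$ is needed. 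Since the statement only asks for the inequality with an $\ep$-loss and an implied constant, the proof reduces to invoking \eqref{eq:kuznetsov}, \eqref{eq:petersson}, positivity of $\CSC$ when $m=n$, Weil's bound, and the standard Bessel transform estimates, exactly as indicated in the excerpt.
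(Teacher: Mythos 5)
Your overall route --- positivity in the hybrid Petersson--Kuznetsov formula plus the Deshouillers--Iwaniec analysis of the Kloosterman/Bessel term --- is the same route the paper takes, since the paper offers no argument beyond citing the proof of Lemma~6 of \cite{bh-gafa} and Section~5 of \cite{DI}; deferring the delicate off-diagonal analysis to those sources is therefore consistent with what is actually done. However, as a standalone proof your sketch has a genuine gap in the normalization bookkeeping. The spectral side of \eqref{eq:kuznetsov} carries the harmonic weight $C_g^{-1}=|\rho_g(1)|^2/\cos\pi r_g$, while the left-hand side of \eqref{eq:large-sieve} carries no weight at all. Your claim that this factor is ``harmless'' is not correct: for Maa\ss{} forms with $r_g\in i\R$ one has $\cos\pi r_g=\cosh(\pi|r_g|)$, which is exponentially large and is exactly offset by the exponential size of $|\rho_g(1)|^2$ for an $L^2$-normalized form; what remains is $C_g^{-1}\asymp (N(1+|r_g|))^{o(1)}N^{-1}$. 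To drop the weight you need a uniform \emph{lower} bound of this quality, which rests on $|\rho_g(1)|^2\gg_\ep \cosh(\pi r_g)\,N^{-1}(N(1+|r_g|))^{-\ep}$, i.e.\ on bounds for $L(1,\mathrm{sym}^2 g)$, together with control of the Gram--Schmidt coefficients $\alpha_{t,s}$ entering $\lambda_g^{(t)}$; none of this appears in your proposal, and it is precisely this weight removal (the spectrum of level $N$ has $\asymp N^{1+o(1)}X^2$ elements with $|r_g|\le X$) that produces the factor $N$ in the $NX^2$ term.

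Two symptoms of this gap appear in your own accounting: your step (3) produces a diagonal of size $X^2\sum|a_m|^2$, not the $NX^2\sum|a_m|^2$ that the statement requires (and which is sharp, since an unweighted count of forms cannot do better), and your step (5) attributes the $NX^2$ term to the congruence $c\equiv 0\ (N)$ emptying the Kloosterman sum in certain ranges, which is not where that term comes from. Separately, your assertion in step (4) that Weil's bound plus decay of $\check\phi$ yields the $M^{1+\ep}$ term is optimistic --- the naive estimate is lossy in the transitional range of the Bessel transform, and the genuine argument in \cite[Section~5]{DI} is more involved --- but since you explicitly defer that step to \cite{DI} and \cite[Lemma~6]{bh-gafa}, as the paper itself does, the substantive missing ingredient is the weighted-to-unweighted passage described above. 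Also, your proposed test function $\phi(r)=\cos(\pi r)e^{(r^2-X^2)/X^2}$ violates the required decay $\phi(z)\ll(1+|z|)^{-2-\ep}$ (it grows on both the real and imaginary axes), so ``suitably modified'' is doing real work there; a Gaussian-type choice without the $\cos\pi r$ factor is what the cited arguments use.
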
	

We turn now to the shifted convolution sum \eqref{eq:shift-conv-sum}, to which we apply the spectral decomposition of Blomer and Harcos \cite{bh-duke,bm-gafa}.
The spectral decomposition is stated in those papers using the language of automorphic forms, and we reproduce it here in the classical language to match the rest of this paper.
To state the theorem, we first define (see also \cite[Eqn. 88]{bh-gafa})
\begin{equation} \label{eq:88}
	\lVert W \rVert_{A^\ell} := \sum_{j\leq k\leq \ell} \Big( \int_{\R^\times} |W^{(j)}(y)|^2 (|y|+|y|^{-1})^k \, \mfrac{dy}{y} \Big)^{1/2}.
\end{equation}

\begin{theorem}[\cite{bm-gafa}, Theorem~2] \label{thm:BH}
Let $f$ be a holomorphic or Maa\ss~ newform of level $N_f$.
Let $a$ and $b$ be nonzero integers and write $N = \lcm(aN_f, bN_f)$. 
For any nonnegative integers $p,q,r$, let $W_1, W_2: \R^\times \to \C$ be arbitrary functions such that $\lVert W_{1} \rVert_{A^\ell}$ and $\lVert W_{2} \rVert_{A^\ell}$ exist for $\ell := 2(8+p+q+2r)$.
Then for each cusp form or Eisenstein series $g$ of level $N_g \mid N$ and for any $t \mid N/N_{g}$ there exists a function $W_{g,t}: \R^\times \to \C$ depending only on $f$, $W_{1}$, $W_{2}$, $g$, and $t$ such that the following two properties hold.
\begin{enumerate}
	\item For $Y>0$ and $h\in \Z\setminus\{0\}$ there is a spectral decomposition 
	\begin{multline}
		\sum_{\substack{am-bn = h}} 
		\frac{\lambda_{f}(m) \lambda_{f}(n)}{\sqrt{mn}} W_1 \Big( \frac{am}{Y} \Big) \bar W_2 \Big( \frac{bn}{Y} \Big)
		\\= \sum_{g \in \mathcal C(N)} \sum_{t \mid N/N_g} \frac{\lambda^{(t)}_{g}(h)}{\sqrt{h}} W_{g,t} \Big(\frac{h}{Y}\Big) + \int_{\tau\in \mathcal E(N)} \sum_{t\mid N/N_\tau} \frac{\lambda_{\tau}^{(t)}(h)}{\sqrt h} W_{\tau,t}\pfrac{h}{Y} \, d\tau.
	\end{multline}
	% where, for cuspidal $\tau$, we have
	% \begin{equation}
	% 	\lambda_\tau^{(t)}(m) := \sum_{s\mid (t,m)} \alpha_{t,s} s^{\frac 12} \lambda_\tau(m/s),
	% \end{equation}
	% where $\alpha_{t,s}\in \C$ is defined as follows.
	% For each pair $s\mid t\mid N'/N_\tau$ there exist $\alpha_{t,s}\in \C$ such that the maps
	% \begin{equation}
	% 	R^{(t)} := \sum_{s\mid t} \alpha_{t,s} R_s: V_{\tau}(N_\tau) \to V_\tau(N')
	% \end{equation}
	% are isometric embeddings with pairwise orthogonal images, and $R^{(1)}$ is the identical inclusion map.
	% The map $R_s$ is given by $R_s(f(z)) = f(sz)$. (All of this $R_*$ business is oldform/newform stuff).
	\item Let $P\in \C[x]$ be a polynomial of degree at most $p$, and let $\mathcal D$ denote the differential operator $\mathcal D:=P(y\partial_{y})$.
	For $y\in \R^\times$ we have
	\begin{multline} 
		\sum_{g\in \mathcal C(N)} \sum_{t \mid N/N_g} (1+|\lambda_{g}|)^{2r} \Big| \mathcal D W_{g, t}(y) \Big|^2 + \int_{\tau\in \mathcal E(N)} \sum_{t\mid N/N_\tau} (1+|\lambda_{\tau}|)^{2r} \Big| \mathcal D W_{\tau, t}(y) \Big|^2 d\tau \\
		 \ll \big|ab\big|^\ep  \lVert W_{1} \rVert_{A^\ell}^2 \lVert W_{2} \rVert_{A^\ell}^2 |y|^{1-2\theta-\ep } \min(1,|y|^{-2q}),
	\end{multline}
	with an implied constant depending only on $f, p,q,r, P$.
\end{enumerate}
\end{theorem}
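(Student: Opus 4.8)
The statement is that of \cite[Theorem~2]{bm-gafa}, refining the spectral approach to shifted convolution sums of \cite{bh-duke,bh-gafa}; here is the shape of the argument. The plan is to realize the shifted convolution sum as the $h$-th Fourier coefficient of an $L^2$ automorphic function and then to read off the decomposition term by term from the spectral expansion of that function. Working in the Maa\ss~case (the holomorphic case is identical after replacing $K$-Bessel functions by weight-$\ell$ Whittaker functions), one builds from $f$ and the weights $W_1,W_2$ a function on $\Gamma_0(N)\backslash\mathbb H$, $N=\lcm(aN_f,bN_f)$, whose $h$-th Fourier coefficient at the cusp $\infty$ equals, up to normalisation and an archimedean factor, the sum $\sum_{am-bn=h}\frac{\lambda_f(m)\lambda_f(n)}{\sqrt{mn}}W_1(\tfrac{am}{Y})\bar W_2(\tfrac{bn}{Y})$. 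The clean way to do this is to pair the product $f(a\cdot)\overline{f(b\cdot)}$ against an incomplete Poincaré series $U_h(z)=\sum_{\gamma\in\Gamma_\infty\backslash\Gamma_0(N)}\psi_h(\im\gamma z)\,e(h\re\gamma z)$ whose seed $\psi_h$ is manufactured, via a Hankel/Bessel transform, so that unfolding $\langle f(a\cdot)\overline{f(b\cdot)},U_h\rangle$ reproduces that sum (the precise manipulation putting $W_1,W_2$ into the seed is carried out in \cite{bh-gafa,bm-gafa}). One then spectrally expands $U_h$ in $L^2(\Gamma_0(N)\backslash\mathbb H)$, taking for the discrete spectrum the Gram--Schmidt basis $\{g^{(t)}:g\in\mathcal U(N),\ t\mid N/N_g\}$ described above --- this is exactly what produces the sum over $t$ and the Hecke pseudo-eigenvalues $\lambda_g^{(t)}$ --- together with $\mathcal H_\ell(N)$ and the Eisenstein series $E_\tau$. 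Pairing term by term, the contribution of $g^{(t)}$ is $\langle f(a\cdot)\overline{f(b\cdot)},g^{(t)}\rangle\,\overline{\langle U_h,g^{(t)}\rangle}$; since the $h$-th Fourier coefficient of $g^{(t)}$ equals $\rho_g(1)\lambda_g^{(t)}(h)$ times a $K$-Bessel factor, this takes the shape $\frac{\lambda_g^{(t)}(h)}{\sqrt h}W_{g,t}(h/Y)$ once $W_{g,t}$ is defined to collect the inner product, the normalisation $\rho_g(1)$, the factor $\sqrt h$, and the Bessel transform of $\psi_h$; that the $h$- and $Y$-dependence assembles into a function of the single ratio $h/Y$ comes from the scaling built into $\psi_h$.

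For part (2) the engine is spectral Parseval for $L^2(\Gamma_0(N)\backslash\mathbb H)$, which gives $\sum_g\sum_t|\langle\Phi,g^{(t)}\rangle|^2+(\text{Eisenstein part})\le\|\Phi\|_2^2$ for any $\Phi$, the non-negativity of the Eisenstein part being why $\CSC\ge0$ when $m=n$. To insert the weight $(1+|\lambda_g|)^{2r}$ I would use $\lambda_g\langle\Phi,g\rangle=\langle\Delta_0\Phi,g\rangle$ and apply $\Delta_0$ to $\Phi=f(a\cdot)\overline{f(b\cdot)}$ the required $r$ times; because $\Delta_0$ lands on the weight embedded in the seed, this turns into additional $y\partial_y$-derivatives of $W_1$ and $W_2$ and hence into higher $A^\ell$-norms, which is why $\ell=2(8+p+q+2r)$ must grow linearly in $r$; the operator $\mathcal D=P(y\partial_y)$ applied to $W_{g,t}$ in the variable $y=h/Y$ is handled the same way, differentiating the Bessel transform of $\psi_h$ in $h$ being equivalent to applying $y\partial_y$ to the seed. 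Bounding $\|\Delta_0^r\Phi\|_2$ and its variants by $\|W_1\|_{A^\ell}\|W_2\|_{A^\ell}$ times a power of $Y$ is a direct estimate from the Fourier expansion of $f$ and Rankin--Selberg \eqref{eq:rankin}, and the level-uniform $|ab|^\ep$ comes from $\mathrm{vol}(\Gamma_0(N)\backslash\mathbb H)\ll N^{1+\ep}$ together with the normalisation $\langle f,f\rangle=1$. Finally, the factor $|y|^{1-2\theta-\ep}\min(1,|y|^{-2q})$ emerges from the archimedean transform: for $|y|\le1$ the factor $\sqrt h=(Yy)^{1/2}$ in the definition of $W_{g,t}$, with the trivial bound for the Bessel transform of $\psi_h$, gives $|y|^{1/2}$, improved to $|y|^{1-2\theta}$ by inserting $|\lambda_f(n)|\ll n^{\theta+\ep}$ and the spectral-gap bound $\lambda_g\ge\tfrac14-\theta^2$ when estimating $\|\Phi\|_2$ near $y=0$; for $|y|>1$, i.e.\ $h>Y$, one integrates by parts $2q$ times using the decay of $W_1,W_2$ at infinity encoded in the weights $(|y|+|y|^{-1})^k$ of $\|\cdot\|_{A^\ell}$ to gain the extra $|y|^{-2q}$.

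The main obstacle is uniformity in $a$ and $b$, because $N=\lcm(aN_f,bN_f)$ may be arbitrarily large. Two difficulties compound. First, the oldform multiplicity at level $N$ forces the non-orthogonal span $\bigoplus_{d\mid N/N_g}g\sl_d\cdot\C$ and hence the Gram--Schmidt basis, so one must push the bounds $\alpha_{t,s}\ll t^\ep(t/s)^{\theta-1/2}$ through every pairing so that the $t$-sum converges with only an $|ab|^\ep$ loss. Second, the hybrid Petersson--Kuznetsov formalism underlying the spectral side converges only conditionally, so the spectral expansion of the Poincaré series and all interchanges of summation have to be justified with care --- precisely the point addressed in the errata to \cite{bh-gafa}. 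A secondary obstacle is obtaining the clean exponent $1-2\theta$ rather than something weaker: this requires the exact behaviour of the $K$-Bessel (respectively $J$-Bessel and Whittaker) transforms of $\psi_h$ in both the transition and oscillatory regimes, and it uses the Kim--Sarnak bound twice --- once for the coefficients $\lambda_f(m),\lambda_f(n)$ in the sum being decomposed, and once for the exceptional eigenvalues of the level-$N$ spectrum.
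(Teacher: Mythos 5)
A preliminary remark: the paper does not prove this statement at all. It is imported verbatim (translated into classical language) from \cite[Theorem~2]{bm-gafa}, whose proof rests on the representation-theoretic machinery of \cite{bh-gafa}; so your sketch has to be measured against that external argument rather than against anything in this paper.

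Measured that way, there is a genuine gap at the central point: the mechanism by which the two \emph{arbitrary} test functions $W_1,W_2$ enter. You propose to pair $f(a\cdot)\overline{f(b\cdot)}$ with an incomplete Poincar\'e series $U_h$ and to ``manufacture'' the seed $\psi_h$ so that unfolding reproduces $\sum_{am-bn=h}\frac{\lambda_f(m)\lambda_f(n)}{\sqrt{mn}}W_1(\frac{am}{Y})\bar W_2(\frac{bn}{Y})$, deferring ``the precise manipulation putting $W_1,W_2$ into the seed'' to \cite{bh-gafa,bm-gafa}. But that is not what those papers do, and it cannot work as stated: unfolding produces, for each pair $(m,n)$, a weight of the form $\int_0^\infty \kappa(amy)\,\kappa(bny)\,\overline{\psi_h(y)}\,\frac{dy}{y}$, where $\kappa(y)=\sqrt{y}\,K_{ir}(2\pi y)$ is the \emph{fixed} archimedean Whittaker function of $f$. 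A single seed $\psi_h$ therefore only yields weights in this special one-parameter family of integrals; it cannot produce a general product of two independently prescribed functions of $am/Y$ and $bn/Y$. The actual proof changes the \emph{vectors}, not the seed: via the Kirillov model of $\pi_{f,\infty}$, any $W_i$ with finite $A^\ell$-norm is realized as the archimedean Fourier--Whittaker data of an automorphic function $F_i$ in the representation generated by $f$, so that $F_1(az)\overline{F_2(bz)}$ is an honest $\Gamma_0(N)$-invariant $L^2$ function whose $h$-th Fourier coefficient is exactly the shifted sum; part (1) then follows by spectrally decomposing this product along the basis $g^{(t)}$ (no Poincar\'e series is needed), with $W_{g,t}$ the Whittaker data of the projections, and part (2) by the Sobolev-norm bookkeeping you describe ($\ell=2(8+p+q+2r)$ from repeated use of the Casimir/Laplacian, $|ab|^\ep$ from the $\alpha_{t,s}$ bounds, $|y|^{-2q}$ from integration by parts) applied to these Kirillov-model vectors. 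Two smaller corrections: the exponent $|y|^{1-2\theta}$ as $y\to 0$ is governed by the exceptional archimedean parameters $|\mathrm{Im}\,r_g|\leq\theta$ of the level-$N$ spectrum (the small-$y$ behaviour of the Whittaker functions of the $g$'s), not by the Hecke bound $|\lambda_f(n)|\leq d(n)n^\theta$; and part (2) is a pointwise-in-$y$ spectral $\ell^2$ bound obtained from Bessel's inequality together with Whittaker asymptotics, not a direct Parseval identity for $\lVert\Phi\rVert_2^2$. Without the Kirillov-model step your outline omits the one idea that makes arbitrary $W_1,W_2$ --- and hence the application to \cref{prop:shift-conv} in this paper --- possible.
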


Remark~12 of \cite{bh-gafa} also gives the $L^1$ bounds
	\begin{equation}  \label{eq:L1}
		\sum_{g\in \mathcal C(N)} \sum_{t \mid N/N_g} (1+|\lambda_{g}|)^{r} | \mathcal D W_{g, t}(y)|
		 \ll |ab|^{\frac 12+\ep}  \lVert W_{1} \rVert_{A^{\ell'}} \lVert W_{2} \rVert_{A^{\ell'}} |y|^{\frac12-\theta-\ep } \min(1,|y|^{-q})
	\end{equation}
and
	\begin{equation}
		\int_{\tau\in \mathcal E(N)} \sum_{t\mid N/N_\tau} (1+|\lambda_{\tau}|)^{r} | \mathcal D W_{\tau, t}(y) | d\tau \label{eq:L1-eis}
		\ll d^{\frac 14}|ab|^{\ep}  \lVert W_{1} \rVert_{A^{\ell'}} \lVert W_{2} \rVert_{A^{\ell'}} |y|^{\frac12-\theta-\ep } \min(1,|y|^{-q})
	\end{equation}
for any $y\in \R^\times$, where $d$ is the largest square divisor of $\lcm(a,b)$ and $\ell' := 2(10+p+q+2r)$.

We estimate $\lVert W_{1}\rVert_{A^j}$ and $\lVert W_{2}\rVert_{A^j}$ for the functions $W_{1}$ and $W_2$ defined in \eqref{eq:W1-W2-def} as follows.

\begin{lemma} \label{lem:W1-W2}
Fix $B\in \N$.
For $W_1$ and $W_2$ defined in \eqref{eq:W1-W2-def}, let $\sigma=\re(s)>0$. 
Then there exists a constant $C=C(B)>0$ such that for any $(k,\ell)\in \mathcal S$ defined in \eqref{eq:S-def} we have
\begin{equation}
	\lVert W_1 \rVert_{A^B} \ll_B (1+|s|)^C \quad \text{ and } \quad \lVert W_2 \rVert_{A^B} \ll_B (1+|s|)^C T^{1+2\sigma} (\log T)^B.
\end{equation}
\end{lemma}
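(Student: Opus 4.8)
The plan is to bound $W_1$, $W_2$, and all of their $y$-derivatives pointwise and then assemble the $A^B$-norm, the key simplification being that both functions are supported in a fixed compact subset of $(0,\infty)$. Indeed $W_1(x)=\rho(x)x^{-s-\alpha}$ is supported in $[1,2]$, while $W_2(y)=\rho(yA_k/A_\ell)\,y^{-s-\beta}\,I(y)$ with $I(y):=\int_{\R}w(t)(1+\tfrac{h/A_k}{y})^{-it}g_{\alpha,\beta}(s,t)\,dt$ is supported in $[A_\ell/A_k,2A_\ell/A_k]$, and this last interval lies in a fixed compact set because $(k,\ell)\in\mathcal S$ forces $A_k\asymp A_\ell$ with absolute implied constants by \eqref{eq:S-def}. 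On such a set the weights $(|y|+|y|^{-1})^{k}$ and the measure $dy/y$ in \eqref{eq:88} are bounded above and below, so it suffices to bound $\sup|W_i^{(j)}|$ for $0\leq j\leq B$. For $W_1$ this is immediate: Leibniz writes $W_1^{(j)}$ as a bounded combination of terms $\rho^{(i)}(x)\cdot(-s-\alpha)(-s-\alpha-1)\cdots(-s-\alpha-j+i+1)\cdot x^{-s-\alpha-(j-i)}$, and since $\re(s+\alpha)>0$ and $x\geq1$ each power of $x$ has modulus $\leq1$, so $|W_1^{(j)}(x)|\ll_j(1+|s|)^{j}$ and hence $\lVert W_1\rVert_{A^B}\ll_B(1+|s|)^{B}$.

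For $W_2$, the factors $\rho(yA_k/A_\ell)$ and $y^{-s-\beta}$ are equally benign (each $y$-derivative of the first costs $A_k/A_\ell\ll1$, of the second $\ll1+|s|$), so by Leibniz everything reduces to showing $|I^{(j)}(y)|\ll_j(1+|s|)^{O(1)}T^{1+2\sigma}(\log T)^{j}$ on the support for $0\leq j\leq B$. A routine Stirling analysis of $g_{\alpha,\beta}(s,t)$ — a product of four $\Gamma$-ratios whose arguments stay a bounded distance from the poles of $\Gamma$ because $|\im r|\leq\theta<\tfrac12$ — shows that the exponential factors combine to modulus $\ll1$ and yields the uniform estimates $|g_{\alpha,\beta}(s,t)|\ll(1+|s|)^{O(1)}(1+|t|)^{2\sigma}$ and $|\partial_t^{i}g_{\alpha,\beta}(s,t)|\ll_i(1+|s|)^{O(1)}(1+|t|)^{2\sigma-i}$. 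Since $w$ is supported in $[T/4,2T]$ with $\int|w|\ll T$, the case $j=0$ is immediate, and the oscillatory factor $(1+\tfrac{h/A_k}{y})^{-it}$ has modulus $1$ because $1\leq|h|\leq H_{k,\ell}$ together with $A_k\asymp A_\ell$ gives $|h|/A_k\ll T^{-1+\delta}$ (by \eqref{eq:Hkl-def}), so $1+\tfrac{h/A_k}{y}>0$.

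The one delicate step is the passage to higher $y$-derivatives of $I$. Differentiating the oscillatory factor naively brings down $-it\,\partial_y\log(1+\tfrac{h/A_k}{y})$, of size $\ll|t|\cdot|h|/A_k\ll T^{\delta}$ on the support — a genuine power-of-$T$ loss per derivative, which would be fatal. To circumvent this, put $L(y):=\log(1+\tfrac{h/A_k}{y})$; this is nonzero since $1\leq|h|$, and a direct computation gives $L'(y)/L(y)\asymp1$ with all higher $y$-derivatives of $L'/L$ of size $O(1)$ on the support. Using $-it\,e^{-itL(y)}=\tfrac{t}{L(y)}\,\partial_t\bigl(e^{-itL(y)}\bigr)$ and integrating by parts in $t$ (the boundary terms vanish since $w$ is compactly supported), each application of $\partial_y$ to $I$ becomes an integral of the same shape carrying one extra $t$-derivative, which lands on $w$, on $g_{\alpha,\beta}$, or lowers a power of $t$; because $w^{(i)}(t)\ll(\log T/T)^{i}$ and $\partial_t^{i}g_{\alpha,\beta}$ gains $(1+|t|)^{-i}$, the power $t^{j}$ that accumulates after $j$ steps is exactly absorbed by the $(\log T/T)^{j}$ from the derivatives of $w$, so each step costs only $\ll(1+|s|)\log T$. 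Iterating gives $|I^{(j)}(y)|\ll_j(1+|s|)^{O(1)}T^{1+2\sigma}(\log T)^{j}$, and assembling as above yields $\lVert W_2\rVert_{A^B}\ll_B(1+|s|)^{C}T^{1+2\sigma}(\log T)^{B}$ for a suitable $C=C(B)$. The main obstacle is thus exactly this last point — avoiding the $T^{\delta}$ loss from each $y$-derivative of the oscillatory factor — which the integration by parts in $t$ against the smooth weight $w$ resolves in favor of a mere logarithm.
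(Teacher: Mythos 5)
Your proposal is correct, and it follows the same initial reduction as the paper: use $A_k\asymp A_\ell$ from \eqref{eq:S-def} so that $W_1,W_2$ defined in \eqref{eq:W1-W2-def} are supported where $y\asymp 1$, whence the weights in \eqref{eq:88} are harmless, the Leibniz rule reduces everything to sup-norm bounds on derivatives, and the $W_1$ bound is elementary. Where you diverge is at the crucial input: the paper disposes of the inner $t$-integral by citing Bernard's estimate \cite[Eqn.~21]{Bernard}, which is exactly your claim $|I^{(j)}(y)|\ll_j (1+|s|)^{O(1)}T^{1+2\sigma}(\log T)^j$, whereas you prove it from scratch. Your mechanism --- the identity $-it\,e^{-itL(y)}=\tfrac{t}{L(y)}\partial_t\big(e^{-itL(y)}\big)$ followed by integration by parts in $t$, together with the observation that $L'/L$ and its $y$-derivatives are $O(1)$ on the support even though $L\asymp h/A_k$ itself is tiny --- correctly defuses the one delicate point, namely that a naive $y$-derivative of the oscillatory factor costs $|t|\cdot|h|/A_k\ll T^{\delta}$ by \eqref{eq:Hkl-def}; after the integration by parts, each $t$-derivative landing on $w$ costs only $\log T$ once the accumulated factor $t^j\asymp T^j$ is absorbed by $w^{(j)}\ll(\log T/T)^j$, and each landing on $g_{\alpha,\beta}$ is neutral, which indeed yields $(\log T)^j$ per $j$ derivatives and hence the stated $(\log T)^B$. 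What your route buys is a self-contained proof of the lemma rather than an appeal to an inequality embedded in Bernard's argument; what it costs is that the Stirling-type estimates $|\partial_t^i g_{\alpha,\beta}(s,t)|\ll_i (1+|s|)^{O(1)}(1+|t|)^{2\sigma-i}$ (including the cancellation of the exponential factors uniformly in $\im(s)$) are asserted as routine rather than carried out; this is acceptable, since it is precisely the level of detail at which the paper, via the citation, treats the same point.
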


\begin{proof}
	The upper bound for $\lVert W_1 \rVert_{A^B}$ is a straightforward computation involving the definitions \eqref{eq:W1-W2-def} and \eqref{eq:88}.
	For $\lVert W_2 \rVert_{A^B}$ we use \cite[Eqn. 21]{Bernard} (together with the nearby comment that the implied constant is polynomial in $|s|$), which states that
	\begin{equation}
		\frac{\partial^j}{\partial y^j} \Big( \int_\R w(t) \Big(1+\mfrac hy\Big)^{-it} g_{\alpha,\beta}(s,t) \, dt \Big) \ll_{j} (1+|s|)^{C_1} y^{-j} T^{1+2\sigma} (\log T)^j,
	\end{equation}
	for some $C_1>0$.
	We observe that for $(k,\ell)\in \mathcal S$ we have $A_k\asymp A_\ell$, so because the support of $\rho$ is contained in $[1,2]$, we have $y\asymp 1$ whenever $W_2^{(j)}(y)\neq 0$.
	This, together with the Leibniz rule for derivatives, yields the lemma.
\end{proof}

We are now ready to estimate \eqref{eq:shift-conv-sum}.

\begin{proposition} \label{prop:shift-conv}
Let $s\in \C$ with $\sigma=\re(s)>0$ and define the set $\mathcal S$, the number $H=H_{k,\ell}$, and the functions $W_1$ and $W_2$ by \eqref{eq:S-def}, \eqref{eq:Hkl-def}, and \eqref{eq:W1-W2-def}. Then there exists a constant $B>0$ such that for any $(k,\ell)\in \mathcal S$ we have
\begin{align}
\label{eq:shift-conv-sum-2}
\begin{aligned}
	\sum_{1\leq |h|\leq H} \sum_{am-bn=h}& \frac{\lambda_f(m)\lambda_f(n)}{\sqrt{mn}} W_1\pfrac{am}{A_k} W_2 \pfrac{bn}{A_k} \\
	&\ll (ab)^{\frac 14}T^{2\sigma+\theta} \Big((ab)^{\frac 14}T^{\frac 12}+A_k^{\frac 12}\Big) (1+|s|)^B (abA_kT)^\ep.
\end{aligned}
\end{align}
\end{proposition}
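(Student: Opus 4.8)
The plan is to feed the shifted convolution sum \eqref{eq:shift-conv-sum} into the Blomer--Harcos spectral decomposition (Theorem~\ref{thm:BH}) with $Y = A_k$, using the functions $W_1,W_2$ from \eqref{eq:W1-W2-def} and their $A^\ell$-norm bounds from Lemma~\ref{lem:W1-W2}. After swapping the order of summation, the task reduces to bounding, for each $g$ (or Eisenstein $\tau$) of level $N_g\mid N$ and each $t\mid N/N_g$, the sum $\sum_{1\le |h|\le H}\frac{\lambda_g^{(t)}(h)}{\sqrt h}W_{g,t}(h/A_k)$, and then summing the resulting bounds over the spectrum. Here $N=\lcm(aN_f,bN_f)\ll ab\,N_f$, so $N\ll (ab)^{1+\ep}$ for fixed $f$.

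\textbf{Main steps.} First I would apply Cauchy--Schwarz to the $h$-sum, separating the arithmetic factor $\lambda_g^{(t)}(h)$ from the analytic weight $W_{g,t}(h/A_k)$. For the arithmetic side, summing $|\lambda_g^{(t)}(h)|^2/h$ over $|h|\le H$ and then over the spectrum with $|r_g|\le X$ is exactly what the large sieve inequality \eqref{eq:large-sieve} controls, once one has truncated the spectrum at a suitable height $X$; the truncation is justified because the weight of large spectral parameters is controlled by the $(1+|\lambda_g|)^{2r}$ (or $(1+|\lambda_\tau|)^{2r}$) factors in part~(2) of Theorem~\ref{thm:BH}, which let one take $r$ large enough that the tail $|r_g|>X$ contributes negligibly (this is why the $\ell$ in the theorem is allowed to grow with $r$). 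For the analytic side, part~(2) of Theorem~\ref{thm:BH} with the differential operator trivial ($P\equiv 1$, $\mathcal D=\mathrm{id}$) gives $\sum_{g,t}(1+|\lambda_g|)^{2r}|W_{g,t}(y)|^2\ll |ab|^\ep \lVert W_1\rVert_{A^\ell}^2\lVert W_2\rVert_{A^\ell}^2 |y|^{1-2\theta-\ep}\min(1,|y|^{-2q})$, and on the range $|h|\le H$ we have $|y|=|h|/A_k\le H/A_k = T^{-1+\delta}\sqrt{A_\ell/A_k}\asymp T^{-1+\delta}$, which is small, so the factor $|y|^{1-2\theta}$ is a genuine saving; the choice of $q$ controls behaviour for $|y|\ge 1$, which does not occur here but is harmless. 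Plugging in Lemma~\ref{lem:W1-W2}, $\lVert W_1\rVert_{A^\ell}\lVert W_2\rVert_{A^\ell}\ll (1+|s|)^{C}T^{1+2\sigma}(\log T)^\ell$, and combining with the large-sieve bound $\ll (NX^2+H)(NHX)^\ep H^{-1}\cdot(\text{number of }h)$-type estimate — more precisely, bounding $\sum_{|h|\le H}|h|^{-1}$ trivially and using $\sum |a_h|^2 \ll H^\ep$ after normalizing — one balances the two ranges $NX^2$ versus $H$ in the large sieve. The term $H\ll T^{-1+\delta}\sqrt{A_kA_\ell}\ll A_k^{1+\ep}T^\ep$ produces the $A_k^{1/2}$ factor (after the square root from Cauchy--Schwarz), while $NX^2 \ll (ab)^{1+\ep}$ with $X$ chosen of size $(abT)^\ep$ produces the $(ab)^{1/4}T^{1/2}$ factor once combined with the remaining $T^{1/2}$ from the $|y|^{1/2-\theta}$ saving against the $T^{1}$ from $\lVert W_2\rVert$. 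One handles the Eisenstein contribution identically, using \eqref{eq:lambda-bound-eis} and the integrated analogue in part~(2) over $\tau\in\mathcal E(N)$; the continuous-spectrum large sieve is standard (Section~5 of \cite{DI}). Collecting, and absorbing all the $\ep$-powers of $ab$, $A_k$, $T$ and the polynomial-in-$|s|$ losses into the stated $(1+|s|)^B(abA_kT)^\ep$, yields \eqref{eq:shift-conv-sum-2}.

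\textbf{Main obstacle.} The delicate point is the interplay between the spectral truncation height $X$ and the large-sieve bound: one must pick $X$ (and correspondingly the parameters $r$, hence $\ell$, in Theorem~\ref{thm:BH}) so that the tail $|r_g|>X$ of the spectral sum is genuinely negligible while $X$ itself is only a power of $(abA_kT)^\ep$, so that $NX^2 \ll (ab)^{1+\ep}$ rather than something larger. This forces one to exploit the full strength of the polynomial decay $(1+|\lambda_g|)^{2r}|\mathcal D W_{g,t}(y)|^2\ll\cdots$ for arbitrarily large $r$, and to check that the $A^\ell$-norms of $W_1,W_2$ (which depend on $\ell$ only through harmless powers of $\log T$ and $1+|s|$, by Lemma~\ref{lem:W1-W2}) do not degrade as $\ell\to\infty$ with $r$. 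Everything else — the two applications of Cauchy--Schwarz, the trivial bound on $\sum_{|h|\le H}|h|^{-1}$, the balancing of $NX^2$ against $H$ in \eqref{eq:large-sieve}, and the bookkeeping of $\ep$-powers — is routine. The factor $(ab)^{1/4}$ rather than $(ab)^{1/2}$ in the first summand is precisely the gain over Bernard's argument, and it comes from the large sieve's $NX^2\ll (ab)^{1+\ep}$ being hit with a square root via Cauchy--Schwarz; keeping track of this requires that the $|ab|^\ep$ in part~(2) of Theorem~\ref{thm:BH} and the $|ab|^\ep$ in \eqref{eq:large-sieve} are the only polynomial dependences on $ab$, which they are.
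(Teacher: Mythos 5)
There is a genuine gap, and it lies in your very first move. Applying Cauchy--Schwarz in the $h$-variable to separate $\lambda_g^{(t)}(h)$ from $W_{g,t}(h/A_k)$ destroys exactly the structure that the large sieve exploits: \eqref{eq:large-sieve} bounds $\sum_{g,t}\big|\sum_h a_h\lambda_g^{(t)}(h)\big|^2$ for coefficients $a_h$ \emph{independent of $g$ and $t$}, whereas after your Cauchy--Schwarz the arithmetic side is the diagonal quantity $\sum_{g,t}\sum_{|h|\le H}|\lambda_g^{(t)}(h)|^2/|h|$. That diagonal (which one gets from \eqref{eq:large-sieve} only by taking delta sequences in $h$) is of size about $NX^2+H\asymp abX^2+H$ and can never be smaller than the Weyl-law count $\asymp abX^2$ of forms with $|r_g|\le X$, since the $h=1$ newform terms each contribute $1$. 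Your analytic side is $\big(\sum_{g,t}\sum_{|h|\le H}|W_{g,t}(h/A_k)|^2\big)^{1/2}\ll \lVert W_1\rVert_{A^\ell}\lVert W_2\rVert_{A^\ell}\,H^{1/2}(H/A_k)^{1/2-\theta}\asymp \lVert W_1\rVert_{A^\ell}\lVert W_2\rVert_{A^\ell}A_k^{1/2}T^{-1+\theta+\ep}$, so the product is about $(ab)^{1/2}A_k^{1/2}T^{2\sigma+\theta}X$, while \eqref{eq:shift-conv-sum-2} demands $(ab)^{1/2}T^{1/2+2\sigma+\theta}+(ab)^{1/4}A_k^{1/2}T^{2\sigma+\theta}$. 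Since $A_k$ runs up to $T^{1+\delta/2}\sqrt{ab}$ and $a,b$ up to $T^{\nu}$, your bound exceeds the claimed one by a factor of roughly $A_k^{1/2}T^{-1/2}\approx(ab)^{1/4}$ in the critical regime; pushed through Proposition \ref{prop:off-diag} and Corollary \ref{cor:off_diag} it would only allow $\nu$ up to about $(1-2\theta)/5$, i.e.\ it loses precisely the improvement the proposition is designed to deliver.

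The missing idea is a separation of variables \emph{before} the large sieve. The paper first writes $W_{g,t}(h/A_k)$ by Mellin inversion, $W_{g,t}(h/A_k)=\frac{1}{2\pi i}\int_{(-\omega)}\widehat W_{g,t}(u)(h/A_k)^{-u}\,du$ with $\omega=\tfrac12-\theta-\ep$ (convergence coming from \eqref{eq:deriv-W-bound}), so that the $h$-coefficients $h^{-1/2}(h/A_k)^{-u}$ are the same for every $g,t$; the large sieve is then applied to the full linear form in $h$, giving $A_k^{-2\omega}(NR^2+H)\sum_{h\le H}h^{-1+2\omega}\asymp T^{-1+2\theta}(abR^2+H)$, so the tiny factor $T^{-1+2\theta}$ multiplies the spectral count $abR^2$ — in your arrangement that count is instead multiplied by an extra $H^{1/2}$. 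Cauchy--Schwarz is taken only over the spectrum, between $\widehat W_{g,t}(u)$ (controlled via part (2) of Theorem \ref{thm:BH} with $(p,q,r)=(3,1,0)$ and $(0,1,0)$ after integration by parts, to get decay in $u$) and the linear forms. Two secondary points: the spectral tail $|r_g|\ge R$ and the Eisenstein contribution are handled in the paper with the $L^1$ bounds \eqref{eq:L1}, \eqref{eq:L1-eis} together with the pointwise coefficient bounds \eqref{eq:lambda-j-t-theta-bound}, \eqref{eq:lambda-bound-eis}; in particular \eqref{eq:large-sieve} as stated covers only $\mathcal C(N)$, so the continuous spectrum is not treated ``identically'' to the discrete one, contrary to your sketch.
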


\begin{proof}
By Theorem \ref{thm:BH}, the left-hand side of \eqref{eq:shift-conv-sum-2} equals
\begin{equation}
	\sum_{|h|\leq H}\sum_{g \in \mathcal C(N)} \sum_{t \mid N/N_g} \frac{\lambda^{(t)}_{g}(h)}{\sqrt{h}} W_{g,t} \Big(\frac{h}{A_k}\Big) + \sum_{|h|\leq H}\int_{\tau\in \mathcal E(N)} \sum_{t\mid N/N_\tau} \frac{\lambda_{\tau}^{(t)}(h)}{\sqrt h} W_{\tau,t}\pfrac{h}{A_k} \, d\tau,
\end{equation}
where $N = \lcm(aN_f,bN_f) \ll_f ab$. 
For the Eisenstein series contribution, we apply the pointwise bound \eqref{eq:lambda-bound-eis} and the $L^1$ bound \eqref{eq:L1-eis} with $p=q=r=0$ to obtain
\begin{align}
	\sum_{|h|\leq H}\int_{\tau\in \mathcal E(N)} \sum_{t\mid N/N_\tau} \frac{\lambda_{\tau}^{(t)}(h)}{\sqrt h} W_{\tau,t}\pfrac{h}{A_k} \, d\tau
	&\ll \sum_{|h|\leq H} (h,N) |h|^{-\frac 12+\ep} \int_{\tau\in \mathcal E(N)} \sum_{t\mid N/N_\tau} \Big| W_{\tau,t}\pfrac{h}{A_k} \Big| \, d\tau \\
	&\ll (ab)^{\frac 14+\ep} A_k^{-\frac 12+\theta+\ep} \lVert W_1 \rVert_{A^{20}} \lVert W_2 \rVert_{A^{20}} \sum_{|h|\leq H} (h,N) |h|^{-\theta} \\
	&\ll (ab)^{\frac 14} A_k^{\frac 12} T^{-1+\theta} \lVert W_1 \rVert_{A^{20}} \lVert W_2 \rVert_{A^{20}} (abA_kT)^\ep,
\end{align}
where we used $H\asymp A_k T^{-1+\ep}$ in the last line.
It follows from Lemma \ref{lem:W1-W2} that the Eisenstein series contribution is at most
\begin{equation}
	(ab)^{\frac 14} A_k^{\frac 12} T^{2\sigma+\theta} (1+|s|)^{B_1} (abA_kT)^\ep,
\end{equation}
for some $B_1>0$.

We turn our attention to the discrete spectrum.
Let us first consider the contribution from the large eigenvalues, say $|r_g|\geq R\geq 1$.
Using the pointwise bound \eqref{eq:lambda-j-t-theta-bound} for $\lambda_g^{(t)}(h)$ we find that
\begin{equation}
	\sum_{|h|\leq H} \sum_{\substack{g\in \mathcal C(N) \\ |r_g|\geq R}} \sum_{t\mid N/N_g} \frac{\lambda_g^{(t)}(h)}{\sqrt h} W_{g,t} \pfrac{h}{A_k} \ll \sum_{|h|\leq H} |h|^{-\frac 12+\theta+\ep} (h,N)^{\frac 12-\theta} \sum_{\substack{g\in \mathcal C(N) \\ |r_g|\geq R}}\sum_{t\mid N/N_g} \Big|W_{g,t}\pfrac{h}{A_k}\Big|.
\end{equation}
Let $r\geq 1$.
We apply \eqref{eq:L1} with $(p,q,r)=(0,0,r)$ to the inner sums to obtain
\begin{align}
	\sum_{\substack{g\in \mathcal C(N) \\ |r_g|\geq R}}\sum_{t\mid N/N_g} \Big|W_{g,t}\pfrac{h}{A_k}\Big| 
	&\ll R^{-2r} \sum_{g\in \mathcal C(N)}\sum_{t\mid N/N_g} (1+|\lambda_g|)^r \Big|W_{g,t}\pfrac{h}{A_k}\Big|  \\
	&\ll R^{-2r}(ab)^{\frac 12+\ep } \lVert W_1 \rVert_{A^{20+4r}} \lVert W_2 \rVert_{A^{20+4r}} |h|^{\frac 12-\theta-\ep } A_{k}^{-\frac 12+\theta+\ep} \\
	&\ll R^{-2r} (ab)^{\frac 12+\ep}|h|^{\tfrac 12-\theta-\ep}A_k^{-\frac 12+\theta+\ep}(1+|s|)^{B_2} T^{1+2\sigma}(\log T)^{20+4r},
\end{align}
for some $B_2>0$, where we used Lemma \ref{lem:W1-W2} in the last step.
Since $H\asymp A_k T^{-1+\ep}$, it follows that the total contribution from large eigenvalues $|r_g|\geq R$ is at most
\begin{equation} \label{eq:triv-bound-R}
	R^{-2r} (ab)^{\frac 12} A_k^{\frac 12+\theta} T^{2\sigma} (\log T)^{4r} (1+|s|)^{B_2} (abA_kT)^\ep.
\end{equation}

For the remaining eigenvalues we apply Mellin inversion to obtain
\begin{equation}
	\sum_{\substack{g\in \mathcal C(N) \\ |r_g|\leq R}} \sum_{t\mid N/N_g} \sum_{|h|\leq H}\frac{\lambda_g^{(t)}(h)}{\sqrt h} W_{g,t} \pfrac{h}{A_k} = \frac{1}{2\pi i} \int_{(-\omega)} \sum_{\substack{g\in \mathcal C(N) \\ |r_g|\leq R}} \sum_{t\mid N/N_g} \widehat W_{g,t}(u) \sum_{|h|\leq H}\frac{\lambda_g^{(t)}(h)}{\sqrt h} \pfrac{h}{A_k}^{-u} \, du
\end{equation}
for any real $\omega$ satisfying $|\omega|<\frac{1}{2}-\theta$, where $\widehat W_{g,t}(u) = \int_0^\infty W_{g,t}(y) y^{u-1} \, dy$. 
This converges since \eqref{eq:L1} with $(p,q,r) = (p,1,0)$ implies that
\begin{equation} \label{eq:deriv-W-bound}
	|(1+y\partial_y)^p W_{g,t}(y)| \ll_{p,g,t} \min(y^{-\frac12-\theta-\ep },y^{\frac 12-\theta-\ep }).
\end{equation}
By an application of Cauchy-Schwarz, it suffices to estimate
\begin{equation} \label{eq:need-est-1}
	\sum_{\substack{g\in \mathcal C(N) \\ |r_g|\leq R}} \sum_{t\mid N/N_g} |\widehat W_{g,t}(u)|^2
\end{equation}
and
\begin{equation} \label{eq:need-est-2}
	\sum_{\substack{g\in \mathcal C(N) \\ |r_g|\leq R}} \sum_{t\mid N/N_g} \Big| \sum_{|h|\leq H}\frac{\lambda_g^{(t)}(h)}{\sqrt h} \pfrac{h}{A_k}^{-u} \Big|^2.
\end{equation}

By \eqref{eq:deriv-W-bound} and integration by parts we have
\begin{equation}
	\widehat W_{g,t}(u) = \frac{1}{(u-1)^3} \int_0^\infty \left[(1+y\partial_y)^3 W_{g,t}(y)\right] y^{u-1} \, dy.
\end{equation}
Let $\mathcal D=(1+y\partial_y)^3$. 
Then
\begin{gather}
	\sum_{\substack{g\in \mathcal C(N) \\ |r_g|\leq R}}  \sum_{t\mid N/N_g} |\widehat W_{g,t}(u)|^2\ll \frac{1}{(1+|u|)^3} \int_0^{\infty} \int_0^\infty (yz)^{\re(u)-1} \sum_{g,t} |\mathcal DW_{g,t}(y)| |W_{g,t}(z)| \, dydz \\
	 \ll \frac{1}{(1+|u|)^3} \int_0^{\infty} y^{\re(u)-1} \Big(\sum_{g,t} |\mathcal DW_{g,t}(y)|^2\Big)^{\frac 12} dy \int_0^\infty z^{\re(u)-1} \Big(\sum_{g,t} |W_{g,t}(z)|^2 \Big)^{\frac 12} dz.
\end{gather}
Applying Theorem \ref{thm:BH} with $(p,q,r)=(3,1,0)$ and with $(p,q,r)=(0,1,0)$, we find that \eqref{eq:need-est-1} is at most $(1+|u|)^{-3} \lVert W_1 \rVert_{A^{24}}^2 \lVert W_2 \rVert_{A^{24}}^2$.  
To estimate \eqref{eq:need-est-2}, we apply \eqref{eq:large-sieve} to obtain
\begin{equation}
	\sum_{\substack{g\in \mathcal C(N) \\ |r_g|\leq R}} \sum_{t\mid N/N_g} \Big| \sum_{|h|\leq H}\frac{\lambda_g^{(t)}(h)}{\sqrt h} \pfrac{h}{A_k}^{-u} \Big|^2 \ll A_k^{-2\omega} (NR^2+H) (NRH)^\ep \sum_{h\leq H} h^{-1+2\omega}.
\end{equation}
Choosing $\omega>0$ and using that $H\asymp A_k T^{-1+\delta}$, we find that \eqref{eq:need-est-2} is at most
\begin{equation}
	A_k^{-2\omega}H^{2\omega} (abR^2+H)(abRH)^\ep \ll T^{-2\omega} (abR^2+A_kT^{-1})(abRA_kT)^\ep.
\end{equation}
Taking the estimates for \eqref{eq:need-est-1} and \eqref{eq:need-est-2} together, we see (using Lemma \ref{lem:W1-W2}) that for some $B_3>0$, the contribution from the small eigenvalues is bounded above by
\begin{multline}
	\lVert W_1 \rVert_{A^{24}} \lVert W_2 \rVert_{A^{24}} T^{-\omega} ((ab)^{\frac 12}R+A_k^{\frac 12}T^{-\frac 12}) (abRA_kT)^\ep \int_{(-\omega)}(1+|u|)^{-\frac32} \, du \\
	\ll T^{1+2\sigma-\omega}((ab)^{\frac 12}R+A_k^{\frac 12}T^{-\frac 12})(1+|s|)^{B_3}(abRA_kT)^\ep.
\end{multline}

We choose $\omega=\frac 12-\theta-\ep$ (the maximal choice), $R\asymp(A_k^{1/2+\theta})^{1/2r}$, and $r>\frac{1+2\theta}{4\ep}$ so that $R^{-2r} A_k^{1/2+\theta} \asymp 1$ and $R \ll A_k^\ep$.  
Then the total contribution from the continuous and discrete spectra is $\ll(ab)^{1/4}T^{2\sigma+\theta} ((ab)^{1/4}T^{1/2}+A_k^{1/2}) (1+|s|)^B (abA_kT)^\ep$ for some $B>0$.
\end{proof}

\subsection{Proof of Proposition \ref{prop:off-diag}}
By the assumptions on $\alpha,\beta$ we may assume that $|\alpha|,|\beta|<\ep$.
Then, starting with \eqref{eq:Nab-W1-W2}, we apply Proposition \ref{prop:shift-conv}, so that it suffices to estimate
\begin{equation} \label{eq:N+-est}
	(ab)^{\frac 14+\sigma} T^{2\sigma+\theta} (abT)^\ep \sum_{(k,\ell) \in \mathcal S} A_k^{-2\sigma+\ep} ((ab)^{\frac 14}T^{\frac 12}+A_k^{\frac 12}) \int_{(\sigma)} \frac{|G(s)|}{|s|} (1+|s|)^B  \, ds,
\end{equation}
for $\sigma>0$.
(Recall $G(s)$ from Lemma \ref{lem:approx_functional}.)
Since $G(s)$ decays rapidly as $\im(s)\to\pm\infty$, the $s$-integral converges.
Recall that $A_k=2^{k/2}T^{1-\ep}$ and that for $(k,\ell)\in \mathcal S$ we have $T^{1-\ep}\ll A_k\asymp A_\ell\ll T^{1+\ep}\sqrt{ab}$.
Then for the $(k,\ell)$-sum above, we have
\begin{equation}
	\sum_{(k,\ell) \in \mathcal S} A_k^{-2\sigma+\ep} ((ab)^{\frac 14}T^{\frac 12}+A_k^{\frac 12}) \ll ((ab)^{\frac 14}T^{\frac 12} + (ab)^{\frac 14-\sigma}) (abT)^\ep,
\end{equation}
as long as $0<\sigma<\frac 14$.
It follows that \eqref{eq:N+-est} is bounded above by $(ab)^{\frac 12+\sigma}T^{\frac 12+2\sigma+\theta}(abT)^\ep$.  
Taking $\sigma=\ep$, we obtain the bound stated in the proposition.

\section{Proof of Theorem \ref{thm:main_theorem2}}
\label{sec:computation}

Our starting point is the lower bound for $\kappa_f$ given by \eqref{eqn:kappa_lower_bound}. 
Recall the assumption that $Q\in\mathbb{C}[x]$ and $Q(0)=1$.
It follows from Conrey's analysis in  \cite[Section 4]{Conrey} that
\[
\inf_P c(P,Q,R,\nu/2) = \frac{1+e^{2R}|Q(1)|^2}{2}+\frac{\sqrt{AC-\im(B)^2}}{\tanh(\frac{\nu}{2A}\sqrt{AC-\im(B)^2})},
\]
where
{\small\[
A = \int_0^1 e^{2Ry}|Q(y)|^2dy,\quad B = \int_0^1 e^{2Ry}Q(y)(R\bar{Q(y)}+\Bar{Q'(y)})dy,\quad C = \int_0^1 e^{2Ry}|RQ(y)+Q'(y)|^2 dy.
\]}%
For convenience (and with little loss), we introduce the restriction $Q\in\mathbb{R}[x]$, in which case it follows from the above discussion that
\begin{equation} \label{eq:kappa-ABC}
\kappa_f\geq 1-\inf_{\textup{$Q$ real, $R$}}~\frac{1}{R}\log\Big(\frac{1+e^{2R}|Q(1)|^2}{2}+\frac{\sqrt{AC}}{\tanh(\frac{\nu}{2A}\sqrt{AC})}\Big).
\end{equation}
This reduces the problem to finding a polynomial $Q$ (with $Q(0)=1$) and an $R>0$ that maximizes the right-hand side of \eqref{eq:kappa-ABC}.
Following \cite[Section 7]{MR693393}, we consider
\begin{equation}
	Q(x) = 1 + \sum_{n=1}^M h_n \left[ (1-2x)^{2n-1} - 1 \right],
\end{equation}
with $M\geq 1$ and $h_n\in \R$.
To obtain the lower bounds in Theorem \ref{thm:main_theorem}, we take $M=14$ and use the values of $h_n$ and $R$ given in the tables below (depending on how small $\theta$ is).
%Curiously, the maximization problem appears to be more sensitive to the value of $R$ than to the values of the $h_n$, so we require significantly more precision for the former than we do for the latter.

\begin{figure}[!htbp]
	\scriptsize
	\begin{tabular}{c|c}
		$R$   & $9.582304437529595324225525968614158462696585619536462654863$\\
		\hline
		$h_1$ & $3.2650607637931750587244792559874799768045041255799130557$ \\
		$h_2$ & $-35.69693130713001721891231865087654064687229555965328678$ \\
		$h_3$ & $307.14883295928891589009756204165288064665563409009168052$ \\
		$h_4$ & $-1739.103927081019501913939892903078766924017002429996574$  \\
		$h_5$ & $6512.9325542974150723150270068720999657089209281615395908$ \\
		$h_6$ & $-16198.8443601293961103032391540414314371460860896192784$  \\
		$h_7$ & $25957.21515703246078149929293503676356429786232735906639$ \\
		$h_8$ & $-23196.1136942987736061546457127666178068318784665665983$ \\
		$h_9$ & $1766.822480793037850524223302703619208156578957397032833$ \\
		$h_{10}$ & $23510.5438732118788135370157003618549490819858577637324$ \\
		$h_{11}$ & $-31147.450313477985980949805234797917653137151413430706$ \\
		$h_{12}$ & $20163.1035156568404078601394891882725477808931702413852$ \\
		$h_{13}$ & $-6904.5360619620713355820026759440194356027492039210252$ \\
		$h_{14}$ & $1001.213829782225368635171849616243036610842971507148658$ \\
		\hline
		& $\kappa_f>0.0696$
		\end{tabular}
		\caption{%Middle column: Unconditional lower bound; Right column: Conditional on the Ramanujan conjecture.
	Values of $R$ and $h_n$ when $\theta=\frac{7}{64}$.}
	\end{figure}
\begin{figure}[!htbp]
	\scriptsize
	\begin{tabular}{c|c}
		$R$ & $7.65208835094302319781281223375733559244215115656457449206824$  \\
		\hline
		$h_1$ & $2.76591562468250508070995158206560886257952253926602201302$ \\
		$h_2$ & $-24.953344958701578608094342599145795564509677472168223757$ \\
		$h_3$ & $200.979584757620870082162895767337819051600939086950347820$ \\
		$h_4$ & $-1135.7009379004886683791498121437935780154665640397909333$ \\
		$h_5$ & $4400.79389257036604453210634685290016467996411799112455297$ \\
		$h_6$ & $-11685.944299525457982968825745037097501864284460873563400$ \\
		$h_7$ & $21012.4431008707944609811293884869360821464728093593306850$ \\
		$h_8$ & $-24307.727935478566887581509351616632376257812726305311352$ \\
		$h_9$ & $14812.4249653214080776097144795696331108125704065596268115$ \\
		$h_{10}$ & $1611.9036496014506124399090833449500819481038030909555252$ \\
		$h_{11}$ & $-11123.93686807508540451538386504193951998533566471937387$  \\
		$h_{12}$ & $9234.6455282293357101009990009864484443907079856350398614$ \\
		$h_{13}$ & $-3551.076363873755489914586897948635652602188820369924054$  \\
		$h_{14}$ & $553.88321676561034086729538059117184038452850086361067769$ \\
		\hline
		& $\kappa_f>0.0896$
		\end{tabular}
		\caption{%Middle column: Unconditional lower bound; Right column: Conditional on the Ramanujan conjecture.
	Values of $R$ and $h_n$ when $\theta=0$.}
	\end{figure}

\bibliographystyle{abbrv}
\bibliography{zeros-paper.bib}

\end{document}